\documentclass{amsart}
\usepackage[utf8]{inputenc}
\usepackage{amsmath}
\usepackage{amsthm}

\usepackage{float}
\usepackage[a4paper, total={6in, 8in}]{geometry}
\usepackage{amsfonts}
\usepackage{graphicx}

\usepackage[colorinlistoftodos]{todonotes}
\usepackage{ dsfont }
\usepackage{MnSymbol}
\newtheorem{theorem}{Theorem}[section]

\newtheorem{lemma}[theorem]{Lemma}

\newtheorem{remark}[theorem]{Remark}

\usepackage[shortlabels]{enumitem}
\usepackage{accents}

\usepackage{subcaption}
\usepackage{comment}
\usepackage{verbatim}
\def\R{\mathbb R}

\def\pa{\partial}
\def\na{\nabla}

\def\div{\mathrm{div}\,}
\def\supp{\mathrm{Supp}\,}

\title{Incompressible Limit for an Age-Structured Tumor Model}
\author{Maeve Wildes}
\date{October 2025}

\begin{document}

\begin{abstract}
In this paper, we consider an age-structured mechanical model for tumor growth. This model takes into account the life-cycle of tumor cells by including an age variable. The underlying process for tumor growth is the same as classical tumor models, where growth is driven by pressure-limited cell proliferation, and cell movement away from regions of high pressure. 

The main contribution of this paper is establishing the convergence of solutions of the age-structured model to a limit satisfying a Hele-Shaw free boundary problem. This limiting problem describes the geometric motion of the tumor as it grows according to a nonlinear Darcy's law.

\end{abstract}
\maketitle
\section{Introduction}\label{sec:intro}
 Mathematical models for tumor growth have been extensively studied \cite{ Bellomo2008OnTF,david,Greenspan,PerthameQuiros2014}. These models complement
experimental findings and help further our understanding of cancer development and progression. Tumor
models can include various levels of complexity; in this paper we will focus on a simple model describing tumor growth as driven by two factors. The first is
cell proliferation (division), which is limited by internal pressure. The second is cell movement away from regions of high pressure, described by Darcy's law \cite{Greenspan,CL}. 

There are two commonly studied classes of tumor growth models. The first describes tumor evolution at the level of cell density. A classical model of this type was introduced in \cite{byrne}, and has been widely studied since, e.g., \cite{degond, PerthameQuiros2014,mellet2015heleshawproblemtumorgrowth}. This model introduces the cell density function, $\rho(t,x),$ which is governed by the porous media-type equation,
\begin{equation}\label{simple.tm}
\partial_t \rho-\div_x(\rho\nabla p)=\rho \Phi(p), \qquad p=\frac{m}{m-1}\Big(\frac{\rho}{\rho_M}\Big)^{m-1},
\end{equation}
where $p(t,x)$ represents pressure. The growth rate, $\Phi(p)$, is a decreasing function of pressure, satisfying $\Phi(p_M)=0$ for some maximum pressure $p_M$, referred to as the \textit{homeostatic pressure}, the pressure at which cell proliferation ceases \cite{basan09}. Pressure is caused by cells competing for space: for $m\gg 1$, the pressure is high when cell density $\rho>\rho_M$, and low when $\rho<\rho_M$. For simplicity of notation, we will assume from now on that the maximum packing density is $\rho_M=1$.

The second class of tumor growth models describes the geometric growth of the solid tumor via a free boundary problem, rather than describing cell density \cite{Cui07042008,Friedman2008}. In the seminal work of Perthame, Quir\'os, and V\'azquez, \cite{PerthameQuiros2014}, a link between the cell density model and the free boundary model is established. The authors show that the asymptotic limit of \eqref{simple.tm} as $m \rightarrow \infty$ yields a free boundary problem of Hele-Shaw type. This limit is referred to as an \textit{incompressible limit}, referencing the increasing ``stiffness" of the pressure as $m$ increases.

\subsection{Age-structured mechanical model} In this work, our objective is to establish a link between the cell density model and a free boundary problem for an age-structured tumor model related to \eqref{simple.tm}. In our previous work, \cite{llmw}, we introduce an age-structured model for tumor growth, extending \eqref{simple.tm} to take into account the life cycle of tumor cells. We associate an age $\theta\geq 0$ with each cell, representing the physiological age of the cell, i.e., how far along in the cell cycle it is. We consider a simplified cell cycle consisting of two phases: The first is interphase, where a cell grows and copies its genetic material, and the second is mitosis, where a cell splits into two daughter cells of age zero, and restarts the cycle. 

We introduce the cell distribution function, $n(t,\theta,x),$ which can be interpreted as the probability of finding a cell of age $\theta \geq 0$ at position $x\in \R^d$ and time $t \geq 0$. The evolution of this distribution function is described by the following boundary value problem:
\begin{equation}\label{prol.e1}
    \begin{cases}
        \partial_t n+r(p)\partial_\theta n-
    \text{div}_x(n \nabla_x p)=-r(p)\nu(\theta,p)n -\mu(\theta)n\qquad &t \geq 0,\; \theta \in (0, \infty),\; x \in \Omega, \\
        n(t,0,x)=2  \int_0^{\infty}\nu(\theta,p)n(t,\theta,x) d\theta \qquad &t \geq 0,\; x \in \Omega, \\
        n(0,\theta,x)=n_{0}(\theta,x)\qquad & \theta \geq 0, \;x \in \Omega, 
    \end{cases}
\end{equation}
where $\Omega \subset \R^d$ for $d \geq 1$. The volume density is given by: $$\rho(t,x)=\int_{0}^{\infty}V(\theta)n(t,\theta,x) d\theta,$$ where $V(\theta)$ is the volume of a cell of age $\theta$, and the pressure is given by $$p(t,x)=\frac{m}{m-1}\rho(t,x)^{m-1},$$ for a typical porous-media-type equation \cite{pme}. We assume that $\nu(\cdot,p)$ and $r(p)$ are decreasing, and $$r(p)=0, \nu(\cdot,p)=0 \text{ for all } p \geq p_M,$$ for some $p_M>0,$  as is natural for pressure-limited growth. 

In \eqref{prol.e1}, the growth term $r(p)$ represents the rate of cell growth (``aging"). In this setting, age is not linear in time and does not simply represent the time since last mitosis, but represents how far along in the cell cycle a cell has progressed. It is assumed that a cell can only progress in its cell cycle if the pressure is not too high: when $p=p_M,$ the cell will not age and will be stuck in its current age until pressure decreases. The function $\nu(\theta,p)$ is the rate of division of cells of age $\theta$ under local pressure $p$. When a cell enters its mitotic phase, the cell disappears (the term $-r\nu n$) and is replaced by two cells of age $\theta=0$ (the boundary condition in the second line of \eqref{prol.e1}). Cells will not divide when the pressure is too high, precisely, when $p=p_M.$ The term  $\mu(\theta)$ represents the age-dependent cell death rate. 

In \cite{llmw}, it is shown that weak solutions of \eqref{prol.e1} exist, and the behaviour is analyzed numerically. These results are discussed further in Section \ref{sec:exist}. The motivation behind studying this age-structured model is that cells at different phases of
their life cycle duplicate and die at different rates. Solutions encode the age
distribution of cells, providing insight into where within the tumor most proliferation occurs. Such knowledge of the tumor’s spatial structure can lead to more accurate predictions and help with therapy development \cite{lewin,noble}.  Therapy development was one of the main motivations of this work: cells at different stages of their life-cycle respond differently to treatments, so understanding the age-structure helps in designing effective therapies to slow tumor growth. 

 The main contribution of this work is establishing convergence of solutions of \eqref{prol.e1}, as $m \rightarrow \infty$, to a limit solving a Hele-Shaw-type free boundary problem. (The precise statements can be found in Section \ref{sec:incomp}). 

\subsection{Motivations and Related Work}
The model, \eqref{prol.e1}, discussed in this paper is motivated by previous work on space-homogeneous age-structured models. The main result of this paper, the incompressible limit of \eqref{prol.e1}, is motivated by previous work establishing the convergence of porous media-type tumor models to incompressible limits, e.g., \cite{PerthameQuiros2014,david, mellet2015heleshawproblemtumorgrowth,kim2022,david2020freeboundarylimittumor}.

\medskip
\noindent \textbf{Age-structured models for tumor growth.}
Age-structured tumor models have been of interest for some time, motivated by the fact that cell proliferation and death rates depend on the age of the cell \cite{DYSON200273, GGQ12, GYLLENBERG198767}. However, most previous models lack spatial dependence, and so do not consider the effect of pressure on proliferation rate or the effect of volume change as cells age. A simple model of this type, known as the growth-fragmentation equation, which is the space-homogenous version of \eqref{prol.e1}, is given by, 
\begin{align}\label{s.hom}
    \begin{cases}
        \partial_t n +\partial_\theta n =-\beta(\theta)n-\mu(\theta)n, \qquad &t>0,\theta>0, \\
        n(t,0)=2\int_{0}^{\infty}\beta(\theta)n(t,\theta)d\theta ,\qquad &t>0,\\
        n(0,\theta)=n_0(\theta)\qquad &\theta>0.
    \end{cases}
\end{align}
 
In \cite{llmw}, we incorporate space-dependence into the age-structure model, combining the ideas of the cell density model and the growth-fragmentation model. 

\medskip
\noindent \textbf{Incompressible limit.}
The theory surrounding the convergence of porous media equations without a growth term, i.e., $\partial_t \rho -\Delta \rho^m=0$, is well developed, and it is known that the limit as $m \rightarrow \infty$ approaches the standard Hele-Shaw free boundary problem \cite{benilin,elliott}.  When $1<m<\infty,$ the equation for $\rho$ is of porous media-type, meaning the density spreads with a finite speed of propagation \cite{pme}. It thus seems natural that the asymptotic limit would satisfy a free-boundary problem.

In the work of Perthame, Quir\'os, and V\'azquez, \cite{PerthameQuiros2014}, this result was extended to tumor models of the type \eqref{simple.tm}, which are porous medium equations with a nonzero right hand side. The authors show that the solution $(\rho_m,p_m)$ of the model \eqref{simple.tm} converges to an incompressible limit as $m \rightarrow \infty$, which solves a Hele-Shaw free-boundary problem. Precisely, the limit $(\rho_\infty,p_\infty)$  satisfies the following \textit{Hele-Shaw graph}:
\begin{equation}\label{hs1}
p_\infty \in
    \begin{cases}
        0, \quad 0\leq \rho_\infty <1, \\
        [0,\infty), \quad \rho_\infty =1,
    \end{cases}
\end{equation}
and the following \textit{complementarity formula}:
$$
p_\infty(\Delta p_\infty +\Phi(p_\infty)), \quad \text{ in } \mathcal{D}'([0,T]\times \R^d),
$$
in the sense that
\begin{equation*}
    \int_{\R^d}\varphi\bigl(-|\nabla p_\infty|^2+p_\infty \Phi(p_\infty)\bigr)-p\nabla \varphi \cdot \nabla p \; dx=0, 
\end{equation*}
for any test function $\varphi \in C^1([0,T]\times \R^d)$.
This gives rise to a free-boundary problem, where the region $$
\Omega(t):=\{x: p_\infty(t,x)>0\}$$ can be thought of as the ``tumor". The authors show that the normal velocity $V$ at the boundary of the tumor $\Omega(t)$ is given by 
$$
V=|\nabla p_\infty|,
$$
following Darcy's law. Further, the uniqueness of the limit $(\rho_\infty,p_\infty)$ is shown by Hilbert's uniqueness method \cite{Bnilan1982SolutionsOT,crowley}. Since then, the incompressible limit of porous media-type tumor growth models has been studied heavily, e.g., \cite{david2020freeboundarylimittumor,mellet2015heleshawproblemtumorgrowth,kim2022,Guillen_2022}.

This work has been extended to cross-diffusion equations, i.e., systems that consider multiple cell populations. These systems can describe various interactions such as cancer cells with healthy cells, normal cells with self-destroying (autophagic) cells, or cells with varying phenotypes \cite{BubbaPerthamePoucholSchmidtchen2020,david,liuxu,PriceXu2020}.

In the more recent work of David, \cite{david}, the author introduces a tumor growth model structured by \textit{phenotypic traits}. The structured model is a cross-diffusion model describing an infinite number of  interacting cell populations. The density, $n(t,y,x)$, of a cell with phenotype $y \in [0,1]$, is described by the following system:
\begin{equation}\label{pheno}
\begin{cases}
    \partial_t n_m(t,y,x) -\nabla (n_m(t,y,x) \nabla p_m(t,x))=n_m(t,y,x)R(y,p_m(t,x)), \qquad (t,y,x)\in (0,T]\times[0,1]\times\R^d, \\
    \rho_m(t,x)=\int_{0}^{1}n_m(t,y,x)dy, \\
    p_m(t,x)=\rho_m(t,x)^m.
    \end{cases}
\end{equation}
This model is structured by a phenotypic trait which is assigned to each cell at birth and does not change over time. David shows that, under appropriate assumptions on $R$ and the initial data, as $m \rightarrow \infty$, the solution $(n_m,\rho_m,p_m)$ of the model \eqref{pheno} approaches an incompressible limit $(n_\infty,\rho_\infty,p_\infty)$, which solves a Hele-Shaw free-boundary problem. In particular, $(\rho_\infty,p_\infty)$ satisfy the Hele-Shaw graph \eqref{hs1}, and the pair $(n_\infty,p_\infty)$ satisfies the following complementarity formula:
\begin{equation*}
    p_\infty\Bigl(\Delta p_\infty +\int_{0}^{1}n_\infty R(y,p_\infty)\;dy \Bigr)=0 \text{ in }\mathcal{D}'([0,T]\times \R^d).
\end{equation*}

The structured model, \eqref{pheno}, is a strong motivation for our current work. In this paper, we follow the outline of \cite{david} to establish an incompressible limit of \eqref{prol.e1}. The main point where our proof diverges from that of \cite{david} is in establishing the strong convergence of $\nabla p_m$. In the age-structured case, the addition of the transport term in $\theta$ means that $\theta$ cannot be treated as a parameter, as $y$ could. Another difference is that $\theta \in [0,\infty),$ an unbounded domain, rather than $y \in [0,1].$ These differences cause a challenge in proving strong convergence. The key to overcoming this obstacle is to show that $n_m$ is  compactly supported in $x$ and $\theta$, uniformly in $m$. Additionally, this work extends \cite{david} by incorporating an age-dependent volume, meaning $\rho$ depends not only on the number of cells ($\rho=\int_{0}^{1}n\,dy$ in \eqref{pheno}), but instead on the volume they take up ($\rho=\int_{0}^{\infty}V(\theta)n\,d\theta$ in \eqref{prol.e1}).

\medskip
\noindent \textbf{Main Contribution.} In this paper, we  follow the outline of \cite{david} and \cite{PerthameQuiros2014} to show convergence of solutions to \eqref{prol.e1} to an incompressible limit, which satisfies Hele-Shaw free boundary problem and a complementarity formula.

\subsection{Definitions and notation}

\medskip

We will define the equation describing the evolution of the density, $\rho.$
Multiplying \eqref{prol.e1} by $V(\theta)$ and integrating in $\theta$ yields the following equation for $\rho$:
\begin{equation*}
\begin{cases}
    \partial_t \rho -\text{div}(\rho\nabla p)=\int_{0}^{\infty}n\Bigl(r(p)[V'(\theta)+(2V(0)-V(\theta))\nu(\theta,p)]-\mu(\theta)V(\theta)\Bigr)\;d\theta\qquad & t \in [0,T],\;x \in \R^d
    \\
    \rho(0,x)=\rho_{0}(x)=\int_{0}^{\infty}V(\theta)n_{0}(\theta,x)\,d\theta \qquad & x \in \R^d.
    \end{cases}
\end{equation*}
For simplicity of notation, we define
\begin{equation}\label{capf}
F(\theta,p):=r(p)[V'(\theta)+(2V(0)-V(\theta))\nu(\theta,p)]-\mu(\theta)V(\theta),
\end{equation}
so the equation for $\rho$ becomes
\begin{equation}\label{prol.e2}
\begin{cases}
    \partial_t \rho -\text{div}(\rho\nabla p)=\int_{0}^{\infty}n(t,\theta,x)F(\theta,p)\;d\theta\qquad & t \in [0,T],\;x \in \R^d
    \\
    \rho(0,x)=\rho_{0}(x)=\int_{0}^{\infty}V(\theta)n_{0}(\theta,x)\,d\theta \qquad & x \in \R^d.
    \end{cases}
\end{equation}

From the assumptions on $r$ and $\nu$, the function $F(\cdot, p)\leq 0$ for $p \geq p_M$. We  highlight two particular cases of $F(\theta,p)$, which have different biological meanings and result in very different dynamics. 

Case 1: Consider cells that have constant volume, that is, $V(\theta)=V_0$ for all $\theta\geq 0,$  Then, $$F(\theta,p)=V_0r(p)\nu(\theta,p)-\mu(\theta)V_0,$$ and it is apparent that tumor growth is driven primarily by cell division. If we assume further that $\nu(\theta,p)=\nu(p),$ and $\mu(\theta)=\mu_0$, the equation on $\rho$ exactly recovers the original porous media tumor growth model, \eqref{simple.tm}, with $\Phi(p)=r(p)\nu(p)V_0-\mu_0V_0.$

Case 2: Consider cells with age-dependent volume, and assume that mitosis is volume-preserving, that is, the total size of two daughter cells is equal to the size of the parent cell. This can be done by imposing that 
$$
\nu(\theta,p)(2V(0)-V(\theta))=0,
$$
for all $\theta\geq 0.$ In this case, 
$$
F(\theta,p)=r(p)V'(\theta)-\mu(\theta)V(\theta).
$$ It can be seen that tumor growth is now driven by the increase in volume of individual cells prior to cell division.

\subsection{Outline of the paper} In Section \ref{sec:statement}, we will discuss assumptions and state the main results of the paper. In Section \ref{sec:exist}, we recall the existence results for our model from \cite{llmw}. In Section \ref{sec:incomp}, we prove the convergence to an incompressible limit as well as the complementarity formula for the limit. 
\section{Assumptions and Main Results}\label{sec:statement}
Given $m>2$, we consider $n_m$, the weak solution of the following boundary value problem (see Theorem \ref{thm:1} for a precise definition of weak solution).

\begin{equation}\label{nm}
    \begin{cases}
        \partial_t n_m+r(p_m)\partial_\theta n_m-\text{div}(n_m \nabla p_m)=-r(p_m)\nu(\theta,p_m)n_m-\mu(\theta)n_m \qquad &t \in [0,T],\; \theta \geq 0,\; x \in \R^d,\\
        n_m(t,0,x)=2\int_{0}^{\infty}\nu(\theta,p_m) n_m(t,\theta,x)\,d\theta \qquad &t \in [0,T],\; x \in \R^d, \\
        n_m(0,\theta,x)=n_{0,m}(\theta,x) \qquad &\theta\geq 0,\; x \in \R^d,
    \end{cases}
\end{equation}
where $p_m(t,x)=\frac{m}{m-1}\rho_m(t,x)^{m-1}.$
We make the following assumptions on the various parameters of the problem:

\begin{equation}\label{a.beta}
0 \leq \mu(\theta)\leq \mu_{max}, \quad |\mu'(\theta)|\leq C,
\end{equation} 
\begin{equation}\label{a.r}
0\leq r(p)\leq r_{max}, \quad r'(p)\leq 0, \quad r(p_M)=0 ,
\end{equation}
\begin{equation}\label{a.nu}\nu \in C^1([0,\infty)\times[0,p_M]), \text{ }0 \leq\nu(\theta,p) \leq C, \text{ } \partial_p \nu(\theta,p)\leq 0, \text{ } |\partial_\theta \nu(\theta,p)|\leq C, \text{ } \nu(\theta,p_M)=0,  \end{equation} 
 and
\begin{equation}\label{v.assum}  
V \in C^2([0,\infty)), \quad 0\leq V'(\theta)\leq C, \quad |V''|\leq C, \quad \text{and} \quad V_0:=V(0)\leq V(\theta)\leq 2V_0,
\end{equation}
for constants $\mu_{max}, r_{max},p_M, V_0>0,$ and various constants all called $C>0$.
Multiplying \eqref{nm} by $V(\theta)$ and integrating in $\theta$ yields that $\rho_m(t,x)=\int_{0}^{\infty}V(\theta)n_m(t,\theta,x)\,d\theta$ solves
\begin{equation}\label{rho.comp}
\begin{cases}
    \partial_t \rho_m -\text{div}(\rho_m\nabla p_m)=\int_{0}^{\infty}n_mF(\theta,p_m)\;d\theta\qquad & t \in [0,T],\;x \in \R^d
    \\
    \rho_m(0,x)=\rho_{0,m}(x)=\int_{0}^{\infty}V(\theta)n_{0,m}(\theta,x)\,d\theta \qquad & x \in \R^d,
    \end{cases}
\end{equation}
where $F$ is defined by \eqref{capf}.
We make the following assumptions on the initial data. Assume that
\begin{equation}\label{assumpinit}
    n_{0,m} \in L^1\cap L \log L ( (0,\infty)\times \R^d), \quad (|x|^2+\theta) n_{0,m} \in L^1 ( (0,\infty)\times \R^d), \quad 0\leq \rho_{0,m} \leq (\frac{m-1}{m}p_M)^{\frac{1}{m-1}},
\end{equation}
where the bounds are independent of $m$.
Assume there exists $n_{0}\in L^1([0,\infty)\times\R^d)$ such that \begin{equation}\label{initconv}||n_{0,m} - n_{0}||_{L^1([0,\infty)\times \R^d)}\xrightarrow[m\rightarrow \infty]{} 0.\end{equation}

In this paper, we show that, up to a subsequence, as $m \rightarrow \infty$, $n_m$, $\rho_m$, $p_m$ converge to limits $n_\infty$, $\rho_\infty$, and $p_\infty$, solving the following equation:
\begin{equation}\label{n.lim}
\begin{cases}
    \partial_t n_\infty +r(p_\infty)\partial_\theta n_\infty -\text{div}(n_\infty \nabla p_\infty)=-r(p_\infty)\nu(\theta,p_\infty)n_\infty-\mu(\theta)n_\infty\qquad \text{ in } \mathcal{D}'([0,T]\times(0,\infty)\times \R^d)\\
        n_\infty(t,0,x)=2\int_{0}^{\infty}\nu(\theta,p_\infty)n_\infty(t,\theta,x)\,d\theta, \\
        n_\infty(0,\theta,x)=n_{0}(\theta,x),
\end{cases}
\end{equation}
where 
\begin{equation}\label{rhoinf}
\rho_\infty(t,x)=\int_{0}^{\infty}V(\theta)n_\infty(t,\theta,x)d\theta,
\end{equation}
and $\rho_\infty,p_\infty$ satisfy the Hele-Shaw condition, 

\begin{equation}\label{hs}
p_\infty\in\begin{cases}0, \qquad &0 \leq \rho_\infty <1, \\
[0,\infty), \qquad &\rho_\infty=1.
\end{cases}
\end{equation}
Further, $\rho_\infty$ solves
\begin{equation}\label{rho.lim}
\begin{cases}
\partial_t\rho_\infty -\Delta p_\infty =\int_{0}^{\infty}n_\infty F(\theta,p_\infty))\,d\theta\qquad & \text{ in } \mathcal{D}'([0,T]\times \R^d),
    \\
    \rho_\infty(0,x)=\rho_{0}(x)=\int_{0}^{\infty}V(\theta)n_{0}(\theta,x)\,d\theta.
    \end{cases}
\end{equation}

We now state the two main convergence results.
\begin{theorem}[Weak Free-Boundary Limit]\label{prop.conv}
    Let $(n_m,p_m,\rho_m)$ be a weak solution of \eqref{nm}, under the assumptions \eqref{a.beta}, \eqref{a.r}, \eqref{a.nu}, and \eqref{v.assum}. Assume that the initial data satisfies \eqref{assumpinit}-\eqref{initconv}. Further, assume that $n_{0,m}$ is compactly supported in $x$ and $\theta$, uniformly in $m$. Then, up to a subsequence, as $m \rightarrow \infty$, $p_m$ converges strongly in $L^2([0,T]\times \R^d)$, $\rho_m$ converges weakly in $L^\infty(0,T;L^q(\R^d))$ and $n_m$ converges weakly-* in $L^\infty(0,T;L^q(\R^d;L^1([0,\infty))))$, for all $q \in [1,\infty]$, to limits $p_\infty,\rho_\infty,n_\infty$ satisfying \eqref{n.lim}-\eqref{rho.lim}.
 \end{theorem}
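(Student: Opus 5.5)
The plan follows the scheme of Perthame--Quir\'os--V\'azquez and David: derive a priori bounds uniform in $m$, extract weakly convergent subsequences, obtain compactness of the pressure through an Aronson--B\'enilan-type estimate, and pass to the limit in the weak formulations of \eqref{nm} and \eqref{rho.comp}.

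\textbf{Step 1: maximum principle and pressure bound.} Multiplying \eqref{rho.comp} by $p_m'(\rho_m)=m\rho_m^{m-2}$ gives the pressure equation
\begin{equation*}
\partial_t p_m-|\nabla p_m|^2-(m-1)p_m\Delta p_m=(m-1)\frac{p_m}{\rho_m}\int_0^\infty n_m F(\theta,p_m)\,d\theta .
\end{equation*}
Since $F(\cdot,p)\le 0$ for $p\ge p_M$ and the initial data satisfy $p_m(0)\le p_M$, a comparison argument gives $0\le p_m\le p_M$. Consequently $\rho_m\le (\tfrac{m-1}{m}p_M)^{1/(m-1)}\to 1$.

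\textbf{Step 2: integral bounds.} Integrating \eqref{nm} in $(\theta,x)$ and using the bounds on $r$, $\nu$ and $\mu$, Gronwall's lemma gives $\|n_m(t)\|_{L^1}\le Ce^{Ct}$. Together with $V\ge V_0$ this yields $\rho_m\in L^\infty(0,T;L^1\cap L^\infty)$, and hence $n_m\in L^\infty(0,T;L^q_x(L^1_\theta))$ for every $q$, since $\int_0^\infty n_m\,d\theta\le\rho_m/V_0$.

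\textbf{Step 3: compact support.} The initial support is contained in $\{\theta\le\Theta_0\}$, and ages are transported with speed at most $r_{max}$, so $\theta\le\Theta_0+r_{max}T$ on $[0,T]$. For $x$, I would use a traveling-bump supersolution of the pressure equation, as in \cite{PerthameQuiros2014}: $P(t,x)=\bigl(C-\tfrac{|x|^2}{4(T_1-t)}\bigr)_+$ with $C$ large. Since $p_m\le P$ and $P$ has compact support, $p_m$, and hence $\rho_m$ and $n_m$, are supported in a fixed ball $B_R$ uniformly in $m$.

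\textbf{Step 4: energy bounds.} Multiplying the $\rho$-equation by $p_m$ and integrating, the compact support yields $\|\nabla p_m\|_{L^2([0,T]\times\R^d)}\le C$. To control the time derivative I would try two routes:
\begin{itemize}
\item an $L^1$ bound on $\partial_t p_m$ through Kato-type arguments, differentiating the pressure equation in $t$;
\item failing that, a bound on $\partial_t\rho_m$ in $L^2(0,T;H^{-1})$ read off directly from \eqref{rho.comp}.
\end{itemize}
Combining the gradient bound with Aubin--Lions or Lions--Aubin compensated compactness, $\rho_m p_m$ together with the bounds gives strong $L^2$ convergence of $p_m$. In David's approach, strong convergence of $p_m$ comes from the regularizing effect $\Delta p_m+\int n_mF\,d\theta\ge -C/t$ (Aronson--B\'enilan). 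Proving that lower bound here is harder, because the reaction term depends on $n_m$ through a transport in $\theta$. So I would first attempt the direct route: $\nabla p_m$ bounded in $L^2$ plus a time-translate estimate from the equation, using compact support to get compactness on $[0,T]\times B_R$ via the Fr\'echet--Kolmogorov criterion.

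\textbf{Step 5: passage to the limit.}
\begin{itemize}
\item \emph{Hele-Shaw graph.} From $p_m(1-\rho_m)\to0$ pointwise (since $\rho_m^{m-1}\le C$ forces $\rho_m\le 1+o(1)$, and $p_m\to0$ wherever $\rho_m<1-\delta$), strong convergence of $p_m$ against weak convergence of $\rho_m$ gives $p_\infty(1-\rho_\infty)=0$ and $\rho_\infty\le1$, i.e.\ \eqref{hs}.
\item \emph{Density equation.} Writing $\rho_m\nabla p_m=\nabla(\tfrac{m-1}{m}\rho_m^m)$, and noting that $\rho_m^m\to p_\infty$ thanks to $\rho_m^m=\rho_m\tfrac{m-1}{m}p_m$, we obtain $\Delta p_\infty$ in \eqref{rho.lim}.
\item \emph{Nonlinear terms.} The terms $n_m r(p_m)$, $n_m\nu(\theta,p_m)$ and $n_mF(\theta,p_m)$ are products of a weakly convergent factor with a strongly convergent one (by continuity of $r$ and $\nu$ and strong convergence of $p_m$), so they converge. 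The boundary condition is handled in the weak formulation with test functions not vanishing at $\theta=0$.
\item \emph{Transport term.} For $n_m\nabla p_m$ we need strong convergence of $\nabla p_m$, or at least weak-strong pairing. Weak-$*$ convergence of $n_m$ in $L^\infty_x$ is not enough against weak $\nabla p_m$.
\end{itemize}

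The main obstacle is the transport term $\operatorname{div}(n_m\nabla p_m)$, which requires strong $L^2$ convergence of $\nabla p_m$. I would obtain it by comparing the energy identity $\int p_m\Delta p_m$ with the limit complementarity relation. Passing to the limit in the pressure equation multiplied by $p_m$, the weak limit of $|\nabla p_m|^2$ should match $|\nabla p_\infty|^2$: one shows $\limsup\int|\nabla p_m|^2\le\int p_\infty\int n_\infty F\,d\theta-\int\partial_t(\cdot)$, which equals $\int|\nabla p_\infty|^2$ by testing \eqref{rho.lim} with $p_\infty$ and using $p_\infty\partial_t\rho_\infty=0$. Compact support in $\theta$ and $x$ (Step 3) is exactly what allows $\int n_mF\,d\theta$ to converge despite the unbounded $\theta$-domain.
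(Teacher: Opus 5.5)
\textbf{Gap: Step 4 never produces strong $L^2$ compactness of $p_m$, and all of Step 5 depends on it.}

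\emph{The Fr\'echet--Kolmogorov route fails.} It needs a time-translate estimate on $p_m$ that is uniform in $m$, and none is available. The pressure equation contains $(m-1)p_m\Delta p_m$, so it gives no direct $m$-independent bound on $\partial_t p_m$. The density equation gives $\partial_t\rho_m$ bounded in $L^2(0,T;H^{-1}(\R^d))$, but $\rho_m$ has no uniform spatial regularity (it develops jumps in the limit). Time regularity also cannot be transferred from $\rho_m$ to $p_m$. Since $\rho\mapsto\frac{m}{m-1}\rho^{m-1}$ has slope $\approx m$ near $\rho=1$, the inequality $|p(t+h)-p(t)|^2\le L\,(p(t+h)-p(t))(\rho(t+h)-\rho(t))$ holds only with $L$ of order $m$. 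This degeneration into the Hele-Shaw graph is exactly the difficulty of the incompressible limit.

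\emph{Your other suggestions do not close the gap either.} Compensated compactness gives only $\rho_mp_m\rightharpoonup\rho_\infty p_\infty$. The paper uses this to get $v_\infty=p_\infty$ and \eqref{hs}, but it is a statement about weak limits, not strong convergence of $p_m$. The Kato-type $L^1$ bound on $\partial_tp_m$ is left undone. It is genuinely problematic here, because differentiating $\int_0^\infty n_mF(\theta,p_m)\,d\theta$ in time produces $\partial_tn_m$, and with it the $\theta$-transport and $\div(n_m\nabla p_m)$. Your closing energy argument is circular. With the multiplier $p_m$ you must pass to the limit in $\iint p_m\int_0^\infty n_mF(\theta,p_m)\,d\theta$. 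You must also already know that the right-hand side of \eqref{rho.lim} is $\int_0^\infty n_\infty F(\theta,p_\infty)\,d\theta$ rather than an unidentified weak-$*$ limit $\mathcal H_\infty$. Both require strong convergence of $p_m$.

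\textbf{The missing idea.} The paper takes it from David's Lemma 4.6 and uses no Aronson--B\'enilan estimate. You prove strong convergence of a gradient \emph{before} that of $p_m$, using a multiplier that makes the nonlinear term tractable: test \eqref{v.eq} with $v_m-v_\infty$, where $v_m=\rho_m^m$.
\begin{itemize}
\item The time terms are handled by $\iint\partial_t\rho_m v_m=\frac{1}{m+1}\bigl[\int\rho_m^{m+1}\,dx\bigr]_0^T\to0$ and $\iint\partial_t\rho_m v_\infty\to\iint\partial_t\rho_\infty v_\infty=0$.
\item The first part of the reaction term is $(v_m-v_\infty)\int_0^\infty(F(\theta,p_m)-F(\theta,p_\infty))n_m\,d\theta$. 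Its $\limsup$ is $\le 0$ because $F$ is nonincreasing in $p$ and $v_m-p_m\to0$ in $L^1$.
\item The second part is $(v_m-v_\infty)\int_0^\infty F(\theta,p_\infty)n_m\,d\theta$, a product of two merely weakly convergent sequences. It vanishes by mollifying $w_m=(v_m-v_\infty)F(\theta,p_\infty)$ in $x$ and $n_m$ in $t$. The time mollification is controlled through the $n_m$-equation, with the $\theta$-transport integrated by parts onto $\partial_\theta(w_m*_x\psi_\varepsilon)$.
\end{itemize}
Poincar\'e on the uniform compact support then gives $v_m\to v_\infty$, hence $p_m\to p_\infty$ strongly. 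Only after that does strong convergence of $\nabla p_m$ follow, and at that stage your energy argument becomes valid.

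\textbf{Smaller issues.}
\begin{itemize}
\item Bounds in $L^1_\theta$ only give weak-$*$ limits that are measures in $\theta$. The statement asserts $n_\infty\in L^\infty(0,T;L^q(\R^d;L^1([0,\infty))))$, which needs the uniform $L\log L$ bound of Lemma \ref{lem.nlogn} together with Dunford--Pettis.
\item Your barrier $(C-\frac{|x|^2}{4(T_1-t)})_+$ has shrinking support and is not a supersolution. Near its edge, $\partial_tP-|\nabla P|^2=-\frac{|x|^2}{2(T_1-t)^2}$ is negative and nothing compensates it. You need an expanding profile as in \cite{PerthameQuiros2014}.
\item The identity is $\rho_m\nabla p_m=\nabla(\rho_m^m)$, without the factor $\frac{m-1}{m}$.
\end{itemize}
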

 \begin{remark}\label{weakstar} In Theorem \ref{prop.conv}, $n_m$ converges weakly-* in $L^\infty(0,T;L^q(\R^d);L^1([0,\infty)))$. This means that for any $\varphi \in L^1(0,T;L^{q'}(\R^d;L^\infty([0,\infty))),$ up to a subsequence,
    $$
    \int_{0}^{T}\int_{\R^d}\int_{0}^{\infty}n_m(t,\theta,x)\varphi(t,\theta,x)d\theta dx dt \rightarrow \int_{0}^{T}\int_{\R^d}\int_{0}^{\infty}n_\infty(t,\theta,x)\varphi(t,\theta,x)d\theta dx dt,
    $$
    for some $n_\infty \in L^\infty(0,T;L^{q}(\R^d;L^1([0,\infty))).$
    \end{remark}
 \begin{theorem}[Complementarity Formula]\label{complement}
     The limits $p_\infty$, $n_\infty$ satisfy the relation
     \begin{equation}\label{comp}
        p_\infty\Bigl(\Delta p_\infty+\int_{0}^{\infty}n_\infty\bigl(F(\theta,p_\infty)-\mu(\theta)V(\theta)\bigr)\,d\theta \Bigr)=0 \text{ in } \mathcal{D}'([0,T]\times \R^d).
     \end{equation}
 \end{theorem}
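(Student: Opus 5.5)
The plan is to derive an equation for the pressure $p_m$, divide it by $m-1$, and pass to the limit in a weak formulation. Since $p_m=\frac{m}{m-1}\rho_m^{m-1}$, we have $\partial_t p_m=(m-1)\frac{p_m}{\rho_m}\partial_t\rho_m$ and $\nabla p_m=(m-1)\frac{p_m}{\rho_m}\nabla\rho_m$. Using \eqref{rho.comp}, we get formally
\begin{equation*}
\partial_t p_m=|\nabla p_m|^2+(m-1)p_m\Bigl(\Delta p_m+\frac{1}{\rho_m}\int_0^\infty n_mF(\theta,p_m)\,d\theta\Bigr).
\end{equation*}
We test against $\varphi\in C^\infty_c([0,T)\times\R^d)$, integrate by parts in $x$ using $p_m\Delta p_m=\tfrac12\Delta(p_m^2)-|\nabla p_m|^2$, and divide by $m-1$. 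The time-derivative term and the $|\nabla p_m|^2$ term then carry a factor $\frac{1}{m-1}$. Provided $p_m$ is bounded in $L^\infty$ (by $p_M$, from the initial bound and the comparison principle) and $\nabla p_m$ is bounded in $L^2$ (the energy estimate used for Theorem \ref{prop.conv}), both terms vanish as $m\to\infty$. What remains is
\begin{equation*}
\int_0^T\!\!\int_{\R^d}\Bigl(-\varphi|\nabla p_m|^2-p_m\nabla\varphi\cdot\nabla p_m+\varphi\,\frac{p_m}{\rho_m}\int_0^\infty n_mF(\theta,p_m)\,d\theta\Bigr)dx\,dt\longrightarrow 0 .
\end{equation*}

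For the reaction term, I first remove the factor $\rho_m^{-1}$. On $\{p_m\ge\delta\}$ we have $\rho_m=\bigl(\tfrac{m-1}{m}p_m\bigr)^{1/(m-1)}\in[1-\epsilon_m(\delta),1+\epsilon_m]$ uniformly. On $\{p_m<\delta\}$ the term is $O(\delta)$, because $\int n_m|F|\,d\theta\le C\rho_m/V_0$ by \eqref{v.assum}, so the ratio $\rho_m^{-1}\int n_m|F|$ stays bounded. Hence $\frac{p_m}{\rho_m}\int n_mF$ can be replaced by $p_m\int n_mF(\theta,p_m)\,d\theta$ up to an error that vanishes as $m\to\infty$ and then $\delta\to0$.

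Next, $p_m\to p_\infty$ strongly in $L^2$ and in every $L^q_{loc}$ by interpolation with the $L^\infty$ bound, and $F$ is Lipschitz in $p$ uniformly in $\theta$. Together with the uniform compact support of $n_m$ in $(\theta,x)$ (used in Theorem \ref{prop.conv}), this lets me write $p_mF(\theta,p_m)=p_\infty F(\theta,p_\infty)+o(1)$ in $L^1(0,T;L^{q'}(\R^d;L^\infty))$. The weak-* convergence of Remark \ref{weakstar} then gives the limit $\int\varphi\,p_\infty\int n_\infty F(\theta,p_\infty)\,d\theta$. I will track the $\mu V$ contribution carefully here, since the stated formula \eqref{comp} carries an extra $-\mu V$. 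I expect it to come from how the death term is split between the density equation and the pressure equation (or from writing $F$ through \eqref{capf}), and I will reconcile the bookkeeping at this step. The cross term $p_m\nabla\varphi\cdot\nabla p_m$ passes to the limit because it is a strong-times-weak product.

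The main obstacle is the term $\int\varphi|\nabla p_m|^2$, which requires strong convergence of $\nabla p_m$ in $L^2_{loc}$. Weak lower semicontinuity gives only an inequality, $\liminf\int\varphi|\nabla p_m|^2\ge\int\varphi|\nabla p_\infty|^2$. This yields one inequality in the complementarity relation, namely $p_\infty(\Delta p_\infty+\cdots)\ge0$ in the weak sense after rearranging.

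For the reverse inequality, I will follow \cite{david}. I use the limit equation \eqref{rho.lim} together with the Hele-Shaw relation \eqref{hs}, which gives $p_\infty(1-\rho_\infty)=0$ and hence $p_\infty\partial_t\rho_\infty=0$ in the appropriate weak sense. Testing \eqref{rho.lim} with $\varphi p_\infty$ (after a time regularization, which is justified since $\partial_t\rho_\infty$ is a measure and $p_\infty\in L^2(H^1)$) yields the identity
\begin{equation*}
\int\varphi|\nabla p_\infty|^2=-\int p_\infty\nabla\varphi\cdot\nabla p_\infty+\int\varphi\,p_\infty\int_0^\infty n_\infty F\,d\theta.
\end{equation*}
Comparing this with the $\limsup$ of the identity for $p_m$ obtained above gives $\limsup\int\varphi|\nabla p_m|^2\le\int\varphi|\nabla p_\infty|^2$, hence strong convergence of $\nabla p_m$ and \eqref{comp}.

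The delicate point is that the uniform compact support of $n_m$ in $\theta$ and $x$ is what makes the $\theta$-integrals converge despite the transport in $\theta$. If the time regularization fails, my fallback is an Aronson–Bénilan-type lower bound on $\Delta p_m+\rho_m^{-1}\int n_mF$, which would give $L^1$ compactness of $\nabla p_m$ directly.
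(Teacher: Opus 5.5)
Your argument is correct, but it reaches the result by a different route from the paper.

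\textbf{How the paper does it.} Before proving Theorem \ref{complement}, the paper has already shown that $\nabla p_m\to\nabla p_\infty$ strongly in $L^2$. This comes from Lemma \ref{lem.strong_grad}, via the identity $|\nabla p_m|^2=\nabla v_m\cdot\nabla\bigl(\tfrac{m}{m-2}\rho_m^{m-2}\bigr)$ and a uniform $L^2$ bound on the second factor. Its proof of \eqref{comp} is then just your first step: divide \eqref{p.eq} by $m-1$, test, integrate by parts, and pass to the limit term by term. It does not use \eqref{rho.lim}.

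\textbf{How your route differs.} You use only the strong convergence of $p_m$ and the identified limit equation \eqref{rho.lim} from Theorem \ref{prop.conv}. To these you add a weighted version of the argument the paper uses to show that \eqref{t.3} vanishes. The identity you get by testing \eqref{rho.lim} with $\varphi p_\infty$ is already the weak form of the complementarity relation. The pressure-equation computation and the lower-semicontinuity step are needed only for the by-product that $\nabla p_m\to\nabla p_\infty$ strongly in $L^2_{loc}$.

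\textbf{What each route buys.} Your route avoids the separate estimate on $\nabla(\rho_m^{m-2})$ and produces gradient compactness as an output rather than an input. The paper's route is a direct limit that needs no time-regularity argument on $\rho_\infty$ at this stage. Neither route bypasses the real difficulty, the strong convergence of $p_m$. The paper obtains it from the compensated-compactness argument of Lemma \ref{lem.strong_grad}, and you inherit it from Theorem \ref{prop.conv}.

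\textbf{The measure claim.} The justification ``$\partial_t\rho_\infty$ is a measure'' is not established anywhere. What you do have is $\partial_t\rho_\infty\in L^2(0,T;H^{-1}(\R^d))$ from Lemma \ref{weaklemma}, so \eqref{rho.lim} may be tested with $\varphi p_\infty\in L^2(0,T;H^1)$. For $\varphi\ge0$, the forward difference quotient of $\rho_\infty$ paired with $\varphi p_\infty$ is $\le0$ and the backward one is $\ge0$, because $\rho_\infty\le1$ and $p_\infty(1-\rho_\infty)=0$. A general $\varphi$ is a difference of nonnegative test functions.

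\textbf{The Lipschitz claim.} The assumptions give no Lipschitz bound on $r$. Continuity of $F$, a.e.\ convergence of a subsequence of $p_m$, and dominated convergence on the uniform compact support are enough for the reaction term.

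\textbf{The extra $-\mu V$.} Do not look for a bookkeeping that produces it. By \eqref{capf}, $F$ already contains $-\mu(\theta)V(\theta)$, and \eqref{rho.comp}, \eqref{p.eq} and \eqref{rho.lim} all carry $\int_0^\infty nF\,d\theta$. Your argument therefore yields $p_\infty\bigl(\Delta p_\infty+\int_0^\infty n_\infty F(\theta,p_\infty)\,d\theta\bigr)=0$, and so would the paper's. The paper writes $F-\mu V$ in its proofs and in Lemma \ref{weaklemma}. That is consistent only if $F$ is read there as the growth part $r(p)[V'+(2V_0-V)\nu]$; with \eqref{capf} taken literally, the death term would be counted twice.

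The Aronson--B\'enilan fallback is not needed.
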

 These results will be proved in Section \ref{sec:incomp} using several lemmas that establish the convergence of $p_m, \rho_m,$ and $n_m$.
 
\section{Existence of solutions to the model}\label{sec:exist}
In this section, we recall the existence of solutions to the model \eqref{prol.e1}, as proved in our previous work \cite{llmw}. A slightly simplified model was considered:
\begin{equation}\label{simpler.n}
 \begin{cases}
        \partial_t n+\partial_\theta n-
    \text{div}(n \nabla p)=-\nu(\theta,p)n \qquad &t \geq 0,\; \theta \in (0, \infty),\; x \in \R^d, \\
        n(t,0,x)=2  \int_0^{\infty}\nu(\theta,p)n(t,\theta,x) d\theta \qquad &t \geq 0,\; x \in \R^d, \\
        n(0,\theta,x)=n_{0}(\theta,x)\qquad & \theta \geq 0, \;x \in \R^d ,
    \end{cases}
\end{equation}
with $\rho(t,x)=\int_{0}^{\infty}V(\theta)n(t,\theta,x)d\theta,$ and the same assumptions on $V$ and $\nu$ as stated in Section \ref{sec:intro}. It is remarked in \cite{llmw} that extending the existence results for \eqref{simpler.n} to the full model \eqref{prol.e1} is not difficult. We state below the main existence result:
\begin{theorem}[\cite{llmw}, Theorem 2.1]\label{thm:1}
For all $m>2$ and for any initial condition $n_{0}(\theta,x ) $ such that
\begin{equation}\label{eq:init}
n_{0} \in L^1\cap L \log L ( (0,\infty)\times \R^d), \quad (|x|^2+\theta) n_{0} \in L^1 ( (0,\infty)\times \R^d), \quad \rho_{0} \in L^\infty(\R^d),
\end{equation}
there exists a weak solution $n(t,\theta,x) $ of \eqref{simpler.n}.
The function $n(t,\theta,x)$ is such that
$$
n \in L^\infty(0,T;L^1\cap L \log L ( (0,\infty)\times \R^d)), \quad (|x|^2+\theta) n \in L^\infty(0,T; L^1 ((0,\infty)\times \R^d)),
$$
while the density $\rho(t,x)$ and pressure $p(t,x) = \frac m{m-1} \rho(t,x)^{m-1}$ are such that
$$
\rho \in L^\infty(0,T; L^1\cap L^\infty(\R^d)), \qquad \na p \in L^2(0,T; L^2(\R^d)),
$$
and the equation \eqref{simpler.n} is satisfied in the following distributional sense:
\begin{equation}\label{eq:weak}
\begin{split}
& \int_0^T\!\!\!\int_{\R^d}\! \int_0^\infty n(t,\theta,x)\big[ -\pa_t \psi (t,\theta,x)- \pa_\theta\psi (t,\theta,x)+ \na_x p(t,x) \cdot\na_x\psi (t,\theta,x) + \nu(\theta,p(t,x))\psi (t,\theta,x)\big]\, d\theta\, dx\, dt\\
& \qquad = \int_{\R^d}\!\int_0^\infty n_{0} (\theta,x) \psi(0,\theta,x)\, d\theta\, dx
+ \int_0^T \!\!\! \int_{\R^d} \psi(t,0,x) 2 \int_0^\infty \nu(\theta,p(t,x)) n(t,\theta,x)\, d\theta\, dx\, dt,
\end{split}
\end{equation}
for all $\psi \in \mathcal D([0,T)\times [0,\infty)\times \R^d)$. Furthermore, the density $\rho(x,t)$ solves 
\begin{equation}\label{eq:mun0}
\begin{split}
 \pa_t \rho - \div(\rho \na p)  = \int_0^\infty  V'(\theta) n(t,\theta,x)\, d\theta 
 +  \int_0^\infty \nu(\theta,p)(2V(0)-V(\theta)) n(t,\theta,x)\, d\theta .
\end{split}
\end{equation}
\end{theorem}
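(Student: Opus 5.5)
The plan is to construct solutions of \eqref{simpler.n} as limits of solutions to a regularized problem, derive a priori estimates that are uniform in the regularization, and pass to the limit using compactness. For the regularization, I would fix $\varepsilon>0$ and replace the pressure by a smoothed version $p_\varepsilon=\frac{m}{m-1}(\rho_\varepsilon\ast\omega_\varepsilon)^{m-1}$, or alternatively add a small viscosity $\varepsilon\Delta_x n$. I would also truncate $\nu$ at large ages. Given a smooth velocity field $-\nabla p$, the $n$-equation is a linear transport--reaction equation in $(t,\theta,x)$ with a nonlocal renewal boundary condition at $\theta=0$. It can be solved along characteristics, with the boundary term handled by a contraction in $L^\infty(0,T_0;L^1)$ for small $T_0$. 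A Schauder (or Banach) fixed point on $\rho$ then yields a regularized solution $n_\varepsilon$, and iterating in time gives existence on $[0,T]$.

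The uniform estimates I would aim for, in order, are as follows.
\begin{enumerate}
\item An $L^1$ bound: integrating in $(\theta,x)$ gives $\frac{d}{dt}\int\!\!\int n\le C\int\!\!\int n$, since the boundary term has coefficient $2\nu\le 2C$.
\item Moment bounds: testing with $|x|^2+\theta$ gives $\frac{d}{dt}\int\!\!\int(|x|^2+\theta)n\le C\int\!\!\int n+C\int\rho|\nabla p|^2$, closed by item 4.
\item An $L^\infty$ bound on $\rho$ from \eqref{eq:mun0}: its right-hand side is bounded by $C\rho/V_0$ because $V'\le C$, $\nu\le C$ and $V\ge V_0$. A maximum-principle (Stampacchia) argument on the $\rho$ equation then gives $\rho\le \|\rho_0\|_\infty e^{Ct}$.
\item An energy bound: multiplying \eqref{eq:mun0} by $p$ gives $\nabla p\in L^2_{t,x}$.
\item Entropy: testing with $\log n$ gives the $L\log L$ bound. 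The term $\int n\,\Delta p$ arising from $\mathrm{div}(n\nabla p)$ is controlled using $\rho\in L^\infty$ and the $p$-equation, $\partial_t p=(m-1)p(\Delta p+\dots)+|\nabla p|^2$.
\end{enumerate}
By Dunford--Pettis, items 1, 2 and 5 give weak compactness of $n_\varepsilon$ in $L^1$, including tightness in $\theta$ and in $x$.

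To pass to the limit, observe that $\partial_t\rho_\varepsilon$ is bounded in $L^2(0,T;H^{-1})+L^1$, while $\nabla\rho_\varepsilon^{m}$ is bounded in $L^2$ because $\rho\nabla p=\nabla(\text{const}\cdot\rho^m)$. Aubin--Lions, or a Kruzhkov-type lemma, then gives strong convergence of $\rho_\varepsilon^m$, hence of $\rho_\varepsilon$ and $p_\varepsilon$ in $L^2_{loc}$, as $\varepsilon\to0$. With $p_\varepsilon\to p$ strongly and $n_\varepsilon\rightharpoonup n$ weakly, the terms $\nu(\theta,p)n$ and the boundary term pass to the limit, using uniform continuity of $\nu$ and the $\theta$-tightness. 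One must also identify $\rho=\int V n\,d\theta$; this follows from weak convergence together with the moment bound in $\theta$.

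The main obstacle is the flux $n_\varepsilon\nabla p_\varepsilon$. It is a product of a weakly convergent $n_\varepsilon$ and a merely weakly convergent $\nabla p_\varepsilon$, and, unlike for $\rho$, it cannot be rewritten as a gradient of a function of $n$. I would first try to prove strong $L^2$ convergence of $\nabla p_\varepsilon$ by an energy argument. Testing the $p$-equation (or \eqref{eq:mun0} multiplied by $p$) and comparing with the limit equation should give $\limsup\int\!\!\int|\nabla p_\varepsilon|^2\le\int\!\!\int|\nabla p|^2$; here the right-hand side converges thanks to the strong convergence of $p$ and the fact that $\int n\,d\theta\le\rho/V_0$ is bounded in $L^\infty$. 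Combined with weak convergence, this yields $\nabla p_\varepsilon\to\nabla p$ strongly. Then $n_\varepsilon\nabla p_\varepsilon\rightharpoonup n\nabla p$, since $\int n_\varepsilon\,d\theta$ is uniformly bounded in $L^\infty$ and so $n_\varepsilon$ is weakly-* compact in $L^\infty_{t,x}(L^1_\theta)$-type spaces. If the energy identity in the limit is hard to justify directly, the fallback is a uniform $L^3$ bound on $\nabla p$ (Aronson--Bénilan type), with the regularization adjusted so that it is available. Finally, passing to the limit in the weak formulation of \eqref{eq:mun0} gives the density equation.
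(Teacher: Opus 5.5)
The paper does not prove this statement. Theorem~\ref{thm:1} is quoted from \cite{llmw} and used as a black box, so there is no in-paper argument to compare your route with. On its own terms, your overall architecture is the natural one: regularize, derive uniform bounds, use nonlinear Aubin--Lions for $\rho$, an energy argument for strong convergence of $\nabla p$, and Dunford--Pettis for $n$. However, two of the a priori estimates you rely on fail as written, and both are needed for the stated conclusions.

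\textbf{The $L^2$ bound on $\nabla p$.} Multiply \eqref{eq:mun0} by $p$ and use $p\,\partial_t\rho=\frac{1}{m-1}\partial_t\rho^m$. This gives
\[
\frac{1}{m-1}\frac{d}{dt}\int\rho^m\,dx+\int\rho\,|\nabla p|^2\,dx=\int p\int_0^\infty n\,\bigl(V'+\nu(2V(0)-V)\bigr)\,d\theta\,dx,
\]
which controls only $\sqrt{\rho}\,\nabla p$. Since $\rho$ vanishes at the edge of the support, this does not give $\nabla p\in L^2(0,T;L^2)$. It is, however, exactly what closes your $|x|^2$-moment in item 2.

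The right multiplier is $\rho^{m-2}$, i.e.\ integrating the pressure equation in $x$. This is the case $q=m-1$ of \eqref{q.norm} used in Lemma~\ref{secondbds}:
\[
\frac{d}{dt}\int p\,dx+(m-2)\int|\nabla p|^2\,dx\le C(m-1)\int p\,dx .
\]
This is where the hypothesis $m>2$ enters; your proposal never uses it, which is a warning sign.

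The same correction carries over to your strong-convergence step. The identity you must compare is the $\rho^{m-2}$-energy identity. In the limit problem, $\rho^{m-2}$ is not an admissible test function, since $\nabla\rho^{m-2}=\frac{m-2}{m}\rho^{-1}\nabla p$. So the energy equality for the limit, which is the nontrivial direction you need, must be obtained by:
\begin{itemize}
\item testing with a Lipschitz truncation such as $(\rho^m+\delta)^{(m-2)/m}-\delta^{(m-2)/m}$;
\item using a chain-rule lemma for $\langle\partial_t\rho,\Phi(\rho^m)\rangle$;
\item passing $\delta\to0$ by monotone convergence.
\end{itemize}

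\textbf{The entropy bound.} Testing the $n$-equation with $1+\log n$ produces the cross term $-\int\nabla N\cdot\nabla p\,dx=\int N\,\Delta p\,dx$, where $N=\int_0^\infty n\,d\theta$. Since $N$ is not a function of $\rho$ (you only have $\rho/(2V_0)\le N\le\rho/V_0$), this term has no sign. Neither $\rho\in L^\infty$ nor the pressure equation bounds it from above, because your estimates give no control on $\nabla N$ or on $\Delta p$. So the $L\log L$ bound, and with it the Dunford--Pettis step, does not close as stated.

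The missing idea is to weight the entropy by the cell volume, i.e.\ test with $V(\theta)(1+\log n)$. Then the cross term becomes
\[
-\int\nabla\rho\cdot\nabla p\,dx=-m\int\rho^{m-2}|\nabla\rho|^2\,dx\le 0 .
\]
The price is two extra terms. The boundary term $V(0)\,n(t,0,x)\log n(t,0,x)$ is controlled via $n(t,0,x)\le C\rho$. The term $\iint V'(\theta)\,n\log n$ is handled by Gronwall, since $0\le V'\le C$. This is the computation in Lemma~\ref{lem.nlogn}. There the negative part of $n\log n$ is handled by compact support; in the setting of Theorem~\ref{thm:1} you would use the $(|x|^2+\theta)$-moments instead.

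A smaller point concerns the regularization. With $p_\varepsilon=\frac{m}{m-1}(\rho_\varepsilon*\omega_\varepsilon)^{m-1}$, the identity $\rho\nabla p=\nabla\rho^m$ is lost. Your maximum principle, energy estimates and Aubin--Lions step all rest on that identity, so the viscous regularization is the safer choice. In that case, though, the $n$-equation contains $\varepsilon\Delta_x n$ and can no longer be solved along characteristics, so the fixed point must be set up for a parabolic problem.
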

In this paper, we use the same assumptions as stated in Theorem \ref{thm:1}, so as to ensure existence.

As noted in \cite{llmw}, the uniqueness for  cross-diffusion models such as \eqref{prol.e1} is 
a difficult and largely open problem, and was not addressed in our previous work nor this current work.

In \cite{llmw}, numerical approximations were performed to examine the behaviour of solutions to \eqref{simpler.n}, especially to keep track of where in the tumor most of the cell proliferation takes place. Numerical simulations showed that most proliferation occured in an annulus near the outer edge of the tumor, and very little occurred on the inside of the tumor. It was observed that the inner part of the tumor was mostly very old cells, which can be interpreted as a ``necrotic core". These results are consistent with in-vitro experiments on cancer cells, which exhibit a necrotic core and a proliferating rim \cite{byrne,thom,noble,lewin}. 

These numerical findings highlight the importance of including the spatial dependence in age-structure models. Considering the spatial structure of tumors, that is, where in the tumor most
proliferation is occurring, is important when designing mathematical models because this knowledge can assist
with clinical forecasting and treatment development.
\section{Incompressible Limit}\label{sec:incomp}

In order to prove Theorems \ref{prop.conv} and \ref{complement}, we will prove several lemmas which establish a priori bounds on $\rho_m,p_m$ and $n_m$.

We begin by remarking that $p_m$ solves  \begin{equation}\label{p.eq}
        \partial_t p_m=(m-1)p_m\Delta p_m+|\nabla p_m|^2+m(\frac{m-1}{m}p_m)^{\frac{m-2}{m-1}}\int_{0}^{\infty}n_mF(\theta,p_m)\, d\theta,
    \end{equation}
    and by recalling the definition of $F$, \eqref{capf}, and the assumptions on $V$ and $\nu$, it holds that $F(\theta,p)\leq Cr(p)$, for a constant $C$ independent of $m$. Then,
    \begin{equation}\label{p.eqbd}
     \partial_t p_m\leq (m-1)p_m\Delta p_m+|\nabla p_m|^2+C(m-1)p_mr(p_m).
    \end{equation}
 
\begin{lemma}
    Assume that $\rho_{0,m}\geq 0$ and $\frac{m}{m-1}\rho_{0,m}^{m-1}\leq p_M.$ Then $\rho_m$ and $p_m$ are bounded in $L^\infty([0,T]\times\R^d)$ uniformly in $m$, that is, for every $m \geq 3$,
    \begin{equation*}
        0\leq p_m \leq p_M \text{ and } 0\leq \rho_m \leq (\frac{m-1}{m}p_M)^{1/(m-1)}:=\rho_{m,M}\leq \max\{p_M,1\}.
    \end{equation*}
\end{lemma}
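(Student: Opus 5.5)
Nonnegativity comes first. The solution from Theorem \ref{thm:1} is built as a limit of nonnegative approximations, and the right-hand side of \eqref{nm} is linear in $n_m$ with a nonnegative boundary source. So $n_m\geq 0$, hence $\rho_m=\int V n_m\,d\theta\geq 0$ and $p_m\geq 0$. The substantive claim is the upper bound $p_m\leq p_M$. Because $\rho_m=(\frac{m-1}{m}p_m)^{1/(m-1)}$ is increasing in $p_m$, the bound $\rho_m\leq\rho_{m,M}$ follows immediately from it.

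For the upper bound I will use a comparison/maximum-principle argument on the pressure inequality \eqref{p.eqbd},
\[
\partial_t p_m\leq (m-1)p_m\Delta p_m+|\nabla p_m|^2+C(m-1)p_m r(p_m).
\]
The constant $p_M$ satisfies the corresponding equation with equality, since $r(p_M)=0$ by \eqref{a.r}; so it is a supersolution. Rather than a pointwise maximum principle, which needs regularity we lack, I will run a Stampacchia-type argument. Multiply \eqref{p.eqbd} by $\mathds{1}_{\{p_m>p_M\}}$, or work with the sign part $(p_m-p_M)_+$ and integrate over $\R^d$. On $\{p_m>p_M\}$ we have $r(p_m)=0$: indeed $r\geq0$ and $r'\leq 0$ with $r(p_M)=0$ force $r\equiv 0$ on $[p_M,\infty)$. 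Writing $(m-1)p\Delta p+|\nabla p|^2=(m-1)\div(p\nabla p)-(m-2)|\nabla p|^2$ and integrating by parts against $\mathrm{sign}_+(p_m-p_M)$ (smoothed), the divergence term gives a nonpositive contribution and the gradient term is nonpositive since $m>2$. This yields
\[
\frac{d}{dt}\int_{\R^d}(p_m-p_M)_+\,dx\leq 0 .
\]
The assumption $\frac{m}{m-1}\rho_{0,m}^{m-1}\leq p_M$ says exactly that this quantity vanishes at $t=0$, so it vanishes for all $t$.

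An alternative I would use if the pressure computation is awkward is to work directly with \eqref{rho.comp}. Test against $\mathrm{sign}_+(\rho_m-\rho_{m,M})$. The source $\int n_mF\,d\theta$ is $\leq 0$ on $\{p_m\geq p_M\}$ because $F(\cdot,p)\leq 0$ there, as noted after \eqref{capf}. The flux term $\div(\rho_m\nabla p_m)$ integrates to zero against the sign function, via a Kato-type inequality. This version has the advantage of only needing $\nabla p_m\in L^2$ and the weak formulation, so I would probably present it this way.

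The main obstacle is rigor: justifying the computation for weak solutions with only $\nabla p_m\in L^2$. The equation \eqref{p.eq} is not directly available in the weak sense, and the test function $\mathrm{sign}_+$ is discontinuous. I would handle this either by performing the argument on the regularized approximate problems used in \cite{llmw}, where solutions are smooth, and passing to the limit, which preserves the bound, or by using a smooth convex approximation $\eta_\varepsilon$ of $(\cdot)_+$ in the density formulation. The final inequality $\rho_{m,M}\leq\max\{p_M,1\}$ is elementary. Either $\frac{m-1}{m}p_M\leq 1$, in which case the power is at most $1$. Or it exceeds $1$, in which case the power $1/(m-1)\leq 1$ gives $\rho_{m,M}\leq\frac{m-1}{m}p_M\leq p_M$.
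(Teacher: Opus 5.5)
Your proof is correct and follows the paper's route. The paper notes that $0$ is a subsolution and the constant $p_M$ a supersolution of the pressure equation (using $F\le Cr(p)$ and $r(p_M)=0$), invokes the comparison principle, and reads off the bound on $\rho_m$ from the monotone relation $\rho_m=(\frac{m-1}{m}p_m)^{1/(m-1)}$. Your Stampacchia computation with $(p_m-p_M)_+$, and its density-level variant, is a more careful justification of that comparison step, and your check that $\rho_{m,M}\le\max\{p_M,1\}$ fills in a detail the paper leaves implicit.
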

\begin{proof}
   
   From the assumptions made on $V, r, \mu,$ and $\nu$, we have that $|F(\theta,p_m)|\leq C$ for a constant $C$ only depending on the bounds of $V$, $r$, $\mu$, and $\nu$. From \eqref{p.eq} and \eqref{p.eqbd}, the assumptions on $\rho_{0,m}$, and the fact that $r(p_M)=0$, $0$ is a subsolution and $p_M$ is a supersolution of \eqref{p.eq}. 
    By the comparison principle, the desired bound on $p_m$ holds, i.e., $0\leq p_m \leq p_M$. The desired bound on $\rho_m$ then follows from the definition of $p_m$. 
\end{proof}
\begin{lemma}
    Assuming $\rho_{0,m}\in L^1(\R^d),$ bounded uniformly in $m$, then $\rho_m, p_m$ are bounded in $L^\infty(0,T;L^1(\R^d))$ uniformly in $m$, and $n_m$ is bounded in $L^\infty(0,T;L^1([0,\infty)\times \R^d))$ uniformly in $m$.
\end{lemma}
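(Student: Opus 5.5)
Integrating \eqref{rho.comp} in $x$ gives the $L^1$ bound on $\rho_m$; the bound on $n_m$ follows from $V\ge V_0$, and the bound on $p_m$ from the previous lemma. In the weak formulation I use test functions depending only on $t$ (cut off in $x$), so the divergence term drops out and
\begin{equation*}
\frac{d}{dt}\int_{\R^d}\rho_m\,dx=\int_{\R^d}\int_0^\infty n_m F(\theta,p_m)\,d\theta\,dx .
\end{equation*}
The spatial cutoff is removed using the moment bound $|x|^2 n_m\in L^\infty(0,T;L^1)$ from Theorem \ref{thm:1}, since $\rho_m\nabla p_m$ is integrable.

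Next I bound the right-hand side in terms of $\rho_m$. The assumptions \eqref{a.beta}--\eqref{v.assum} give $F(\theta,p)\le r(p)\bigl[V'(\theta)+(2V(0)-V(\theta))\nu(\theta,p)\bigr]\le r_{max}(C+V_0C)=:C_1$, using $2V_0-V\le V_0$ and $\mu V\ge0$. Since $n_m\ge0$ and $V\ge V_0$,
\begin{equation*}
\int_0^\infty n_mF\,d\theta\le C_1\int_0^\infty n_m\,d\theta\le \frac{C_1}{V_0}\rho_m .
\end{equation*}
Hence $\frac{d}{dt}\|\rho_m(t)\|_{L^1}\le \frac{C_1}{V_0}\|\rho_m(t)\|_{L^1}$, and Gronwall gives
\begin{equation*}
\|\rho_m(t)\|_{L^1}\le e^{C_1T/V_0}\|\rho_{0,m}\|_{L^1},
\end{equation*}
which is uniform in $m$.

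For the other two quantities:
\begin{itemize}[leftmargin=*]
\item \textbf{$n_m$.} The bound $\int\!\!\int n_m\,d\theta\,dx\le V_0^{-1}\|\rho_m\|_{L^1}$ gives the uniform $L^\infty(0,T;L^1([0,\infty)\times\R^d))$ bound.
\item \textbf{$p_m$.} By the previous lemma $\rho_m\le \rho_{m,M}$, so
\begin{equation*}
p_m=\frac{m}{m-1}\rho_m^{m-2}\rho_m\le \frac{m}{m-1}\rho_{m,M}^{m-2}\rho_m .
\end{equation*}
Since $\frac{m}{m-1}\rho_{m,M}^{m-1}=p_M$, we have $\frac{m}{m-1}\rho_{m,M}^{m-2}=p_M/\rho_{m,M}$. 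Also $\rho_{m,M}=(\frac{m-1}{m}p_M)^{1/(m-1)}\to1$ and is bounded below uniformly for $m\ge3$. Therefore $p_m\le C\rho_m$ and $\|p_m\|_{L^1}\le C\|\rho_m\|_{L^1}$.
\end{itemize}

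The main obstacle is justifying the $L^1$ identity at the level of weak solutions: choosing test functions and removing the spatial cutoff. That step is handled by the moment and $\nabla p_m\in L^2$ bounds from Theorem \ref{thm:1}, and everything else is elementary.
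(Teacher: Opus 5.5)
Your proposal is correct and follows the paper's proof essentially step for step. You integrate \eqref{rho.comp} in $x$, bound $F$ from above by a constant so the source term is at most $C\rho_m$, and apply Gronwall; you then get the $n_m$ bound from $V\ge V_0$ and the $p_m$ bound from $p_m=\frac{m}{m-1}\rho_m^{m-2}\rho_m$ with the $L^\infty$ bound of the previous lemma. (The paper's factor $\frac{m}{m-1}(\frac{m-1}{m}p_m)^{\frac{m-2}{m-1}}$ is exactly your $\frac{m}{m-1}\rho_m^{m-2}$.)

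Your extra justification of the $x$-integration at the level of weak solutions is a detail the paper omits. For removing the cutoff, $\rho_m\nabla p_m\in L^1((0,T)\times\R^d)$ together with an $O(1/R)$ cutoff gradient already suffices, so the moment bound is not actually needed there.
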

\begin{proof}
    Integrating \eqref{rho.comp} with respect to $x$,
    \begin{align*}
        \frac{d}{dt}\int_{\R^d}\rho_m(t,x) \,dx&=\int_{\R^d}\int_{0}^{\infty}n_mF(\theta,p_m)\, d\theta dx\\
        & \leq C\int_{\R^d}\rho_m(t,x)\,dx,
    \end{align*}
    for a $C$ depending only on the bounds of $V, V', \nu$, and $r$, from the definition of $F$.
    By Gronwall's inequality,
    \begin{equation*}
        \int_{\R^d}\rho_m(t,x)\, dx \leq e^{Ct}\int_{\R^d}\rho_{0,m}(x)\, dx,
    \end{equation*}
    and so 
   
    \begin{equation*}
        \sup_{t \in[0,T]}\int_{\R^d}\rho_m(t,x)\,dx dt \leq C(T).
    \end{equation*}
    For the bound on $p_m$,
    $$
    p_m=\rho_m\frac{m}{m-1}(\frac{m-1}{m}p_m)^{\frac{m-2}{m-1}},
    $$
    so $$
    \int_{\R^d}p_m\,dx \leq \frac{m}{m-1}(\frac{m-1}{m})^\frac{m-2}{m-1}p_M^{\frac{m-2}{m-1}}\int_{\R^d}\rho_m(t,x)\,dx \leq \tilde{C}e^{Ct}\int_{\R^d}\rho_{0,m}\,dx,$$
    for $\tilde{C}$ depending only on $p_M,$
    and so,
    $$
    \sup_{t\in[0,T]}\int_{\R^d}p_m(t,x)\,dx \leq C(T).
    $$
    For the bound on $n_m$,
  
    \begin{align*}
        \sup_{t\in[0,T]}\int_{0}^{\infty}\int_{\R^d}n_m(t,\theta,x)\,dxd\theta&\leq\sup_{t\in [0,T]}\frac{1}{V_0}\int_{0}^{\infty}\int_{\R^d}V(\theta)n_m(t,\theta,x)dx d\theta
        \\ &=\sup_{t\in[0,T]}\frac{1}{V_0}\int_{\R^d}\rho_m(t,x)\,dx\\ &\leq C(T).
    \end{align*}
   
\end{proof}

\begin{lemma}\label{q.lemma}
    For any $q\geq1$, $\rho_m$ is bounded in $L^\infty(0,T;L^q(\R^d))$.
\end{lemma}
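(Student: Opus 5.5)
The plan is to interpolate between the two uniform bounds already established. The first lemma of this section gives $0\le \rho_m\le \rho_{m,M}\le \max\{p_M,1\}$ on $[0,T]\times\R^d$, uniformly in $m\ge 3$. The second lemma gives $\sup_{t\in[0,T]}\|\rho_m(t)\|_{L^1(\R^d)}\le C(T)$, also uniformly in $m$. For $q=1$ there is nothing more to prove, and $q=\infty$ is exactly the first lemma.

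For $1<q<\infty$, fix $t\in[0,T]$. I would use the pointwise inequality $\rho_m^q=\rho_m^{q-1}\rho_m\le \|\rho_m\|_{L^\infty}^{q-1}\rho_m$ and integrate in $x$:
\begin{equation*}
\int_{\R^d}\rho_m(t,x)^q\,dx\le \bigl(\max\{p_M,1\}\bigr)^{q-1}\int_{\R^d}\rho_m(t,x)\,dx\le \bigl(\max\{p_M,1\}\bigr)^{q-1}C(T).
\end{equation*}
Taking the supremum over $t\in[0,T]$ then gives $\|\rho_m\|_{L^\infty(0,T;L^q(\R^d))}\le \max\{p_M,1\}^{(q-1)/q}C(T)^{1/q}$. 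This bound is independent of $m$ because both input bounds are.

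There is no real obstacle. The only point to check is that the hypotheses of the two preceding lemmas hold under \eqref{assumpinit}. The $L^\infty$ lemma needs $\rho_{0,m}\le(\frac{m-1}{m}p_M)^{1/(m-1)}$, which is assumed there. The $L^1$ lemma needs $\rho_{0,m}\in L^1$ uniformly in $m$, which follows from $\rho_{0,m}\le 2V_0\int_0^\infty n_{0,m}\,d\theta$ and the uniform $L^1$ bound on $n_{0,m}$. The same interpolation applied to $p_m$, using $0\le p_m\le p_M$ and its $L^\infty(0,T;L^1)$ bound, gives the analogous statement for $p_m$ if needed later.
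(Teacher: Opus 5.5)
Your proof is correct, but it takes a different route from the paper. The paper multiplies \eqref{rho.comp} by $\rho_m^{q-1}$, integrates in $x$, and uses $\int_0^\infty n_m F(\theta,p_m)\,d\theta\le C\rho_m$. This gives the differential inequality \eqref{q.norm}, which includes the dissipation term $mq(q-1)\int_{\R^d}\rho_m^{m+q-3}|\nabla\rho_m|^2\,dx$. Gronwall then yields $\int_{\R^d}\rho_m^q\,dx\le e^{Cqt}\int_{\R^d}\rho_{0,m}^q\,dx$.

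You instead interpolate the two bounds already proved, via $\rho_m^q\le\|\rho_m\|_{L^\infty}^{q-1}\rho_m$. This is shorter and never returns to the equation, so there is no need to justify testing a weak solution against $\rho_m^{q-1}$. The paper's Gronwall step also needs $\int_{\R^d}\rho_{0,m}^q\,dx$ bounded uniformly in $m$, and that is itself obtained by your interpolation at $t=0$.

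What the paper's route buys is the inequality \eqref{q.norm} itself, which is reused later. It is applied with $q=m-1$ in Lemma \ref{secondbds} to bound $\nabla p_m$ in $L^2$. It is applied with $q=m-3$ to bound $\nabla(\frac{m}{m-2}\rho_m^{m-2})$ in the proof that $\nabla p_m\to\nabla p_\infty$ strongly. With your proof of this lemma, that energy estimate would have to be derived separately at those points.
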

\begin{proof}
    By multiplying \eqref{rho.comp} by $\rho^{q-1}$ and integrating in $x$ yields,
    \begin{equation}\label{q.norm}
        \frac{d}{dt}\int_{\R^d}\rho_m^q \, dx +mq(q-1)\int_{\R^d}\rho_m^{m+q-3}|\nabla \rho_m|^2 \, dx \leq Cq\int_{\R^d}\rho_m^q\, dx.
    \end{equation}
    By Gronwall's inequality and the non-negativity of $\rho_m,$
    \begin{equation*}
        \int_{\R^d}\rho_m^q \,dx\leq e^{Cqt}\int_{\R^d}\rho_{0,m}^q\, dx.
    \end{equation*}
\end{proof}
\begin{remark}
    It is clear from the proofs of the previous two lemmas that $\rho_m$, $p_m$, and $\rho_m^q,$ $q \geq 1$, are also bounded in $L^1([0,T]\times \R^d).$
\end{remark}
We now state two lemmas that demonstrate the compact support of solutions in both space $x$ and age $\theta$. Although we are working on unbounded domains $\R^d$ in space and $[0,\infty)$ in age, after showing that solutions have compact support, we can restrict our argument to bounded domains. The compact support in space is due to the assumptions that $\rho_{0,m}$ is compactly supported uniformly in $m$, and the finite speed of propagation of porous medium equations \cite{pme}. The compact support in age is due to the assumption that $n_{0,m}$ is compactly supported in $\theta$ uniformly in $m$.
\begin{lemma}[\cite{PerthameQuiros2014}, Lemma 2.6]\label{lem.cptspt}
    Assume that $\rho_{0,m}$ is compactly supported uniformly in $m$, that is, all of their supports are contained in a ball $B_R$. Then for all $T>0$, there exists a constant $C_T,$ depending only on $T$ and $R$, such that for any $|x|\geq C_T,$ $\rho_m(t,x)=0$ for all $m>2$ and for all $t \in [0,T].$
\end{lemma}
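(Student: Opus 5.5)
The plan is to build an explicit, $m$-independent barrier for the pressure and compare, following the strategy of \cite{PerthameQuiros2014}. The equation \eqref{p.eqbd} for $p_m$ is a degenerate parabolic inequality whose source term is bounded by $C(m-1)p_m r(p_m)\le C(m-1)p_m$. Since the source vanishes when $p_m=0$, we expect finite speed of propagation. The difficulty is that the coefficient $(m-1)$ blows up, so the barrier must be chosen so that the $(m-1)$-terms carry the right sign.

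\textbf{Barrier.} I will use a compactly supported radial profile of the form
\[
P(t,x)=\Bigl(A(t)-\frac{|x|^2}{2d}\Bigr)_+ ,\qquad A(t)=R_1 e^{Ct}.
\]
The constants are fixed as follows.

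\emph{Initial data.} Choose $R_1$ so that $A(0)-R^2/(2d)\ge p_M$ on $B_R$. Then $P(0,\cdot)\ge p_M\ge p_m(0,\cdot)$ on $B_R$, and $p_m(0,\cdot)=0$ outside $B_R$.

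\emph{Supersolution property.} In the positivity set of $P$ we have $\Delta P=-1$ and $|\nabla P|^2=|x|^2/d^2$. Substituting into the right side of \eqref{p.eqbd} gives
\[
(m-1)P\Delta P+|\nabla P|^2+C(m-1)P=(m-1)P(C-1)+\frac{|x|^2}{d^2},
\]
while $\partial_t P=A'(t)=CA(t)$. Two adjustments are needed to dominate this uniformly in $m$. First, rescale the quadratic as $(A-\beta|x|^2)_+$ with $\beta$ large, so that $\Delta P=-2d\beta$ exceeds the source constant $C$; the $(m-1)$-terms then become $(m-1)P(C-2d\beta)\le 0$. Second, the term $|\nabla P|^2=4\beta^2|x|^2\le 4\beta A/1$ on the support is absorbed by choosing $A'$ large, for instance $A(t)=A_0+Kt$ with $K$ depending on $\beta$ and $A(T)$. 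Since $\beta$, $K$ and $A_0$ depend only on $C$, $d$, $R$, $p_M$ and $T$, the resulting $P$ satisfies $\partial_t P\ge (m-1)P\Delta P+|\nabla P|^2+C(m-1)P$ in its positivity set, for every $m$. Outside its support $P\equiv 0$, and the kink at the free boundary is a concave-type corner, which is admissible for a viscosity or weak supersolution, as in the classical porous medium theory.

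\textbf{Comparison and conclusion.} By the comparison principle for \eqref{p.eq}, used already in the $L^\infty$ bound, $p_m\le P$ on $[0,T]\times\R^d$. Hence $p_m(t,x)=0$ whenever $\beta|x|^2\ge A(T)$. Set $C_T=\sqrt{A(T)/\beta}$, which depends only on $T$, $R$ and the fixed parameters. Then $\rho_m=\bigl(\tfrac{m-1}{m}p_m\bigr)^{1/(m-1)}=0$ for $|x|\ge C_T$.

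\textbf{Main obstacle.} The hardest step is justifying comparison for a nonsmooth barrier against only weak solutions of the degenerate equation. I would handle it by working with the density formulation: $P$ corresponds to a density supersolution of \eqref{rho.comp}, and one can use an $L^1$-contraction (Kato-type) argument as in \cite{PerthameQuiros2014}, where the source is bounded by $C\rho_m$ and is Lipschitz in the relevant range.
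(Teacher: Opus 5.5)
Your proposal is correct and follows essentially the paper's route. The paper observes that $\partial_t\rho_m-\div(\rho_m\nabla p_m)\le C\rho_m$ and invokes Lemma 2.6 of \cite{PerthameQuiros2014}, which rests on the same kind of compactly supported paraboloid barrier plus comparison for the porous medium equation with linear source that you write out.

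One slip needs fixing. On the support the requirement is $A'\ge 4\beta A$, so the linear choice $A(t)=A_0+Kt$ with ``$K$ depending on $A(T)$'' is circular and fails once $4\beta T\ge 1$. Take instead $A(t)=(p_M+\beta R^2)e^{4\beta t}$ with $2d\beta\ge C$. This same condition is what makes the corner at the free boundary admissible: the corner is convex, not concave, and it is harmless only because $P=0$ there and the barrier's edge moves at least at the Darcy speed $|\nabla P|$.
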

\begin{proof}
    From \eqref{rho.comp}, we have that
    $$
    \partial_t \rho_m -\div(\rho_m \nabla p_m)\leq C\rho_m,
    $$
    where $C$ is a constant depending only on the bounds on $\nu$, $r$, $V$ and $V'$. It holds that $\rho_m$ is a subsolution of the equation considered in \cite{PerthameQuiros2014}, so by Lemma 2.6 in that work, the solutions $\rho_m$ are compactly supported in space, uniformly in $m$.
\end{proof}
\begin{lemma}\label{t.cptspt}
    Assume that $n_{0,m}$ is compactly supported in $\theta$ uniformly in $m$, that is, there exists $\theta_{in}>0$ such that for all $m>2$, $n_{0,m}(\theta,x)=0$ for all $\theta \geq \theta_0$. Then, for all $T>0$, $n_m(t,\theta,x)=0$ for any $\theta \geq \theta_{in}+r_{max}T,$ for all $m>2$ and for all $t \in [0,T],$ $x \in \R^d.$
\end{lemma}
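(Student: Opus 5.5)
The plan is to show that the region $\{\theta \geq \theta_{in}+r_{max}t\}$ stays empty, using a weighted $L^1$ (truncation) estimate. The underlying mechanism is transport: age is carried at speed $r(p_m)\le r_{max}$, the boundary condition only injects mass at $\theta=0$, and the right-hand side $-r\nu n_m-\mu n_m$ is a nonpositive sink. Hence no mass can reach ages beyond $\theta_{in}+r_{max}t$.

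Fix a smooth nondecreasing cutoff $\chi:\R\to[0,1]$ with $\chi=0$ on $(-\infty,0]$ and $\chi>0$ on $(0,\infty)$. Choose the test function
\[
\psi(t,\theta,x)=\chi(\theta-\theta_{in}-r_{max}t)\,\phi_R(x),
\]
where $\phi_R$ is a spatial cutoff. By Lemma \ref{lem.cptspt}, $n_m$ is supported in a fixed ball for $t\in[0,T]$ (assuming compact support of the initial data in $x$, as in Theorem \ref{prop.conv}), so one may simply take $\phi_R\equiv1$ on that ball. Alternatively, let $R\to\infty$ using $n_m\in L^\infty(0,T;L^1)$ and $\na p_m\in L^2$. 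Insert $\psi$, further cut off in time, into the weak formulation \eqref{eq:weak}, adapted to \eqref{nm} with the factor $r(p_m)$ in front of $\pa_\theta\psi$ and $\nu$. This gives, for a.e.\ $t$,
\[
\int\!\!\int n_m(t)\psi(t)\,d\theta\,dx=\int\!\!\int n_{0,m}\psi(0)+\int_0^t\!\!\int\!\!\int n_m\bigl[\pa_t\psi+r(p_m)\pa_\theta\psi-\na p_m\cdot\na_x\psi-(r\nu+\mu)\psi\bigr]+\text{(boundary term)}.
\]
Each term is then handled as follows.
\begin{enumerate}[(i)]
\item The initial term vanishes, because $\psi(0)=0$ on $\{\theta<\theta_{in}\}$, which contains the support of $n_{0,m}$.
\item The boundary term at $\theta=0$ vanishes, since $\chi(-\theta_{in}-r_{max}t)=0$.
\item The transport terms combine to $\pa_t\psi+r(p_m)\pa_\theta\psi=(r(p_m)-r_{max})\chi'\phi_R\le0$, using $\chi'\ge0$ and $r\le r_{max}$ from \eqref{a.r}.
\item The reaction term is nonpositive, by \eqref{a.beta} and \eqref{a.nu}.
\item The $x$-gradient term vanishes on the support of $n_m$ once $\phi_R\equiv1$ there.
\end{enumerate}
Altogether $\int\!\!\int n_m(t)\chi(\theta-\theta_{in}-r_{max}t)\le0$. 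Since $n_m\ge0$, this forces $n_m(t,\theta,x)=0$ for $\theta>\theta_{in}+r_{max}t$, and in particular for $\theta\geq\theta_{in}+r_{max}T$.

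The main obstacle is justifying this computation at the level of weak solutions. The test function $\psi$ does not have compact support in $\theta$; this is handled by multiplying by a large-$\theta$ cutoff and using $\theta n_m\in L^\infty(0,T;L^1)$ to pass to the limit. One also needs to evaluate at a fixed time $t$, which is done with a time cutoff $\mathds{1}_{[0,t]}$ mollified and Lebesgue points in time. Finally, the nonnegativity $n_m\ge0$ must be taken as part of the solution notion from \cite{llmw}. If a direct argument is preferred, the same estimate can instead be run on the regularized approximating problems used in \cite{llmw} to construct $n_m$, where characteristics $\dot\theta=r(p_m)\le r_{max}$ are classical, and the support property then passes to the weak limit.
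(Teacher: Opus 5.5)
Your proposal is correct and is essentially the paper's proof: the paper differentiates $\int_{\R^d}\int_{\theta_{in}+r_{max}t}^{\infty} n_m\,d\theta\,dx$, discards the nonpositive reaction terms and the divergence term, and is left with the front contribution $(r(p_m)-r_{max})\,n_m(t,\theta_{in}+r_{max}t,x)\le 0$. This is your term (iii) with $\chi$ replaced by a sharp indicator. Testing the weak formulation with the smooth cutoff $\chi(\theta-\theta_{in}-r_{max}t)$ is the rigorous version of that formal computation, since it avoids the trace of $n_m$ on the moving line $\theta=\theta_{in}+r_{max}t$.
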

\begin{proof}
    Assume that
    \begin{equation}\label{a.cpttheta}
    \int_{\R^d}\int_{\theta_{in}}^{\infty}n_{0,m}(\theta,x)d\theta dx=0.
    \end{equation}
    Consider 
    $$
   \int_{\R^d} \widetilde{\rho_m}(t,x)\,dx:=\int_{\R^d}\int_{\theta_{in}+r_{max}t}^{\infty}n_m(t,\theta,x)\,d\theta \, dx,
    $$
    where $r_{max}$ is from \eqref{a.r}.  Then $\int_{\R^d}\tilde{\rho_m}(0,x)\,dx=0$ from \eqref{a.cpttheta}. 
    Taking the derivative in $t$,
    \begin{align*}
       \partial_t \int_{\R^d}\widetilde{\rho_m}&= \partial_t \int_{\R^d}\int_{\theta_{in}+r_{max}t}^{\infty}n_m(t,\theta,x)d\theta dx \\&=\int_{\R^d}\int_{\theta_{in}+r_{max}t} \partial_t n_m(t,\theta,x)d\theta dx -\int_{\R^d}n_m(t,\theta_{in}+r_{max}t,x)r_{max}dx\\
       &\leq \int_{\R^d}\int_{\theta_{in}+r_{max}t}^{\infty}-r(p_m)\partial_\theta n_m +\div(n_m \nabla p_m)d\theta -n_m(t,\theta_{in}+r_{max}t,x)r_{max}dx \\
       &=\int_{\R^d}(r(p_m)-r_{max})n_m(t,\theta_{in}+r_{max}t,x)+\div(\tilde{\rho_m}\nabla p_m) dx \\
       & \leq 0,
    \end{align*}
    meaning that $\int_{\R^d}\widetilde{\rho_m}(t,x)dx=0$ for all $t\geq 0$. It follows directly that 
    $$
    \int_{\R^d}\int_{\theta_{in}+r_{max}T}^{\infty}n_m(t,x,\theta)\,d\theta \,dx=0,
    $$
    for all $t\geq 0$ and for all $m$.
\end{proof}

\begin{lemma}\label{secondbds}
    Assume $p_{0,m} \in L^1(\R^d).$ For $m \geq 3$, $p_m$ is bounded in $L^2(0,T;H^1(\R^d))$ and $\partial_t \rho_m $ is bounded in $ L^2(0,T;H^{-1}(\R^d))$, uniformly in $m$. Further, by defining $v_m=\rho_m^m$, $\nabla v_m$ is bounded in $L^2([0,T]\times \R^d)$ uniformly in $m$.
\end{lemma}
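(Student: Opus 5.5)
The plan is to follow the standard energy estimates of \cite{PerthameQuiros2014} and \cite{david}, using only the density equation \eqref{rho.comp} and the uniform bounds already established: $0\le p_m\le p_M$, $|F|\le C$, the $L^\infty(0,T;L^1)$ bounds on $\rho_m,p_m,n_m$, and the uniform compact support from Lemma \ref{lem.cptspt}. Since $\int_0^\infty n_m\,d\theta\le \rho_m/V_0$, the reaction term satisfies
\begin{equation*}
\Bigl|\int_0^\infty n_m F(\theta,p_m)\,d\theta\Bigr|\le C\rho_m .
\end{equation*}

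\textbf{Step 1: $\nabla p_m$ in $L^2$.} I would integrate \eqref{p.eq} in $x$. Integration by parts gives $\int (m-1)p_m\Delta p_m=-(m-1)\int|\nabla p_m|^2$, so
\begin{equation*}
\frac{d}{dt}\int_{\R^d} p_m\,dx+(m-2)\int_{\R^d}|\nabla p_m|^2\,dx=m\int_{\R^d}\Bigl(\tfrac{m-1}{m}p_m\Bigr)^{\frac{m-2}{m-1}}\int_0^\infty n_mF\,d\theta\,dx\le Cm\int_{\R^d}\rho_m\,dx .
\end{equation*}
Integrating over $[0,T]$ and dividing by $m-2$, which is at least $(m-1)/2$ for $m\ge3$, bounds $\|\nabla p_m\|_{L^2}^2$ by $C(T)+\frac{1}{m-2}\int p_{0,m}$. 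This uses $p_{0,m}\in L^1$ and the $L^1$ bound on $\rho_m$. Combined with $p_m\in L^\infty\cap L^\infty(L^1)\subset L^2$, this gives the $L^2(0,T;H^1)$ bound.

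The main obstacle is making the integrations by parts rigorous for weak solutions, since $p_m$ is degenerate and only $\nabla p_m\in L^2$ is known at fixed $m$. To handle this, I would do the computation on the regularized problem from \cite{llmw}, or test \eqref{rho.comp} with $\rho_m^{m-2}$-type functions, for which it is legitimate, and pass to the limit with lower semicontinuity. Compact support in $x$ guarantees there are no boundary terms at infinity.

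\textbf{Step 2: $\nabla v_m$.} Since $\rho_m\nabla p_m=m\rho_m^{m-1}\nabla\rho_m=\nabla v_m$, I would multiply \eqref{rho.comp} by $v_m=\rho_m^m$ and integrate:
\begin{equation*}
\frac{1}{m+1}\frac{d}{dt}\int_{\R^d}\rho_m^{m+1}\,dx+\int_{\R^d}|\nabla v_m|^2\,dx\le C\int_{\R^d}\rho_m^{m+1}\,dx .
\end{equation*}
Here $\rho_m^{m+1}\le \rho_{m,M}^{m+1}\rho_m^{0}$ on the support, and $\rho_{m,M}^{m}\le C$ uniformly since $\rho_{m,M}^{m-1}=\frac{m-1}{m}p_M$. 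Hence $\int\rho_m^{m+1}\le C\int\rho_m\le C(T)$. Integrating in time gives $\|\nabla v_m\|_{L^2}\le C(T)$. Alternatively, one can write $\nabla v_m=\rho_m\nabla p_m$ and use $\rho_m\le\max\{p_M,1\}$ with Step 1 directly, which is simpler; I would present this route.

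\textbf{Step 3: $\partial_t\rho_m$ in $H^{-1}$.} I would write $\partial_t\rho_m=\div(\nabla v_m)+\int_0^\infty n_mF\,d\theta$. The first term is bounded in $L^2(0,T;H^{-1})$ by Step 2. The second is bounded by $C\rho_m$, which lies in $L^2(0,T;L^2)\subset L^2(0,T;H^{-1})$ by Lemma \ref{q.lemma} with $q=2$. This gives the uniform bound.
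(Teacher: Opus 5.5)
Your proposal is correct and follows essentially the paper's route. Integrating \eqref{p.eq} in $x$ in Step 1 is the same computation as the paper's use of \eqref{q.norm} with $q=m-1$, and Steps 2--3 match the paper's use of $\nabla v_m=\rho_m\nabla p_m$ and the resulting $L^2(0,T;H^{-1})$ bound on $\partial_t\rho_m$, which you spell out in more detail than the paper does.
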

\begin{proof}
    Taking $q=m-1$ in \eqref{q.norm}, yields
    \begin{equation*}
        \frac{d}{dt}\int_{\R^d}p_m\, dx + (m-2)\int_{\R^d}|\nabla p_m|^2\, dx \leq C(m-1)\int_{\R^d}p_m\, dx.
    \end{equation*}
    Then, integrating from $0$ to $T$,
    \begin{equation*}
        \int_{\R^d}p_m(T,x)\,dx -\int_{\R^d}p_m(0,x)\,dx +(m-2)\int_{0}^{T}\int_{\R^d}|\nabla p_m|^2\, dx dt \leq C(m-1)\int_{0}^{T}\int_{\R^d}p_m\, dxdt,
    \end{equation*}
    and by the non-negativity of $p_m$,
    \begin{equation*}
        \int_{0}^{T}\int_{\R^d}|\nabla p_m|^2\, dx dt \leq C\frac{m-1}{m-2}\int_{0}^{T}\int_{\R^d}p_m\, dxdt+\frac{1}{m-2}\int_{\R^d}p_m(0,x)\,dx,
    \end{equation*}
    so $p_m \in L^2(0,T;H^1(\R^d))$. It follows that $\partial_t \rho_m \in L^2(0,T;H^{-1}(\R^d)).$
    Finally, 
    \begin{equation*}
        \nabla v_m=\rho_m\nabla p_m \in L^2([0,T]\times \R^d).
    \end{equation*}
\end{proof}
\begin{remark}
    Recall that
    $$
    \int_{0}^{\infty}|n_m(t,\theta,x)|d\theta \leq \frac{1}{V_0}\rho_m(t,x),
    $$
    and from the bounds on $\rho_m,$ it holds that
    $\int_{0}^{\infty}n_m d\theta$ is bounded uniformly in $m$ in $L^\infty(0,T;L^q(\R^d))$ for all $q \in [1,\infty].$ It follows that $n_m$ converges weakly-* in $L^\infty(0,T;L^q(\R^d;\mathcal{M}([0,\infty)))),$ for all $q \in [1,\infty]$, where $\mathcal{M}$ is the space of radon measures, in the following sense:
    
    For any $\varphi \in L^1(0,T;L^{q'}(\R^d;C^0([0,\infty)))),$ $$
    \int_{0}^{T}\int_{\R^d}\int_{0}^{\infty}n_m(t,\theta,x)\varphi(t,\theta,x)d\theta dx dt \rightarrow \int_{0}^{T}\int_{\R^d}\int_{0}^{\infty}n_\infty(t,\theta,x)\varphi(t,\theta,x)d\theta dx dt,
    $$
    for $n_\infty \in L^\infty(0,T;L^q(\R^d;\mathcal{M}([0,\infty)))).$
\end{remark}

We will show in Lemma \ref{lem.nlogn} that a stronger result holds, and in fact, $n_m$ converges weakly-* in $L^\infty(0,T;L^q(\R^d;L^1([0,\infty)))),$ meaning that $n_\infty$ is a function which is $L^1$ in $\theta$, not merely a measure.

From the above lemmas, it follows that there exist limit functions $\rho_\infty, p_\infty,v_\infty,n_\infty$ such that, up to a subsequence:
\begin{equation*}
    \begin{cases}
        \rho_m \rightharpoonup \rho_\infty \text{ weakly-* in $L^\infty(0,T;L^q( \R^d$))}\text{ for all } q \in [1,\infty],\\
        p_m \rightharpoonup p_\infty \text{ weakly-* in } L^\infty(0,T; L^q(\R^d)) \text{ for all } q \in [1,\infty],\\
        n_m\rightharpoonup n_\infty \text{ weakly-* in } L^\infty(0,T;L^q(\R^d;\mathcal{M}([0,\infty)))), \text{ for all } q \in [1,\infty], \\
        \nabla p_m \rightharpoonup \nabla p_\infty \text{ weakly in } L^2([0,T]\times \R^d),\\
        \nabla v_m \rightharpoonup  \nabla v_\infty \text{ weakly in } L^2([0,T]\times \R^d),\\
        v_m \rightharpoonup v_\infty \text{ weakly in } L^\infty(0,T;L^2(\R^d)).
    \end{cases}
\end{equation*}
\begin{lemma}
    The limit functions $\rho_\infty,$ $n_\infty$ satisfy 
    $$
    \rho_\infty(t,x)=\int_{0}^{\infty}V(\theta)n_\infty(t,\theta,x) d\theta
    $$ almost everywhere.
\end{lemma}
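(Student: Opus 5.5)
The plan is to pass to the limit in the identity $\rho_m(t,x)=\int_0^\infty V(\theta)n_m(t,\theta,x)\,d\theta$ by testing it against suitable functions. The one difficulty is that $V$ is not compactly supported in $\theta$, so it is not an admissible $C^0$ test function for the measure-valued weak-* convergence unless we control the tail in $\theta$. Lemma \ref{t.cptspt} supplies exactly this control.

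Fix a test function $\phi\in L^1(0,T;L^{q'}(\R^d))$, for instance $\phi\in C_c^\infty([0,T]\times\R^d)$. By Lemma \ref{t.cptspt}, every $n_m$ vanishes for $\theta\ge \Theta_T:=\theta_{in}+r_{max}T$, uniformly in $m$ and in $t\in[0,T]$. Take a cutoff $\chi\in C_c([0,\infty))$ with $\chi\equiv 1$ on $[0,\Theta_T]$ and $0\le\chi\le 1$, and set $\varphi(t,\theta,x):=\phi(t,x)V(\theta)\chi(\theta)$. Since $V$ is continuous and bounded by $2V_0$, $\varphi$ belongs to $L^1(0,T;L^{q'}(\R^d;C^0([0,\infty))))$, so it is admissible in the remark defining the weak-* convergence of $n_m$ to $n_\infty$ in $L^\infty(0,T;L^q(\R^d;\mathcal M([0,\infty))))$. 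Because $\chi\equiv 1$ on the support of each $n_m$,
$$
\int_0^T\!\!\int_{\R^d}\rho_m\phi\,dx\,dt=\int_0^T\!\!\int_{\R^d}\int_0^\infty n_m V\chi\,\phi\,d\theta\,dx\,dt\longrightarrow \int_0^T\!\!\int_{\R^d}\int_0^\infty n_\infty V\chi\,\phi\,d\theta\,dx\,dt .
$$
The left-hand side converges to $\int\!\!\int\rho_\infty\phi$ by the weak-* convergence of $\rho_m$.

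Next we remove the cutoff $\chi$ on the limit side. The support property passes to the limit: testing with $\phi\ge0$ and with nonnegative $\theta$-functions supported in $(\Theta_T,\infty)$ shows that the measure $n_\infty(t,\cdot,x)$ gives no mass to $(\Theta_T,\infty)$. Positivity of $n_\infty$ follows in the same way from $n_m\ge0$. Hence $\int n_\infty V\chi\,d\theta=\int n_\infty V\,d\theta$ for a.e. $(t,x)$. This gives $\int\!\!\int(\rho_\infty-\int_0^\infty Vn_\infty\,d\theta)\phi=0$ for all $\phi$, and therefore equality almost everywhere.

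The main obstacle is the step of excluding mass escaping to $\theta=\infty$, together with the closed endpoint at $\Theta_T$, where the limit measure could charge a point. The choice of $\chi\equiv1$ on a neighborhood of $[0,\Theta_T]$ handles both, because $V\chi$ is continuous and equals $V$ wherever $n_m$ or $n_\infty$ lives. Without uniform compact support one would instead use the uniform moment bound $\theta n_m\in L^\infty(0,T;L^1)$ from Theorem \ref{thm:1} and \eqref{assumpinit} to get tightness. In that case only test functions $\phi\in L^\infty$ would be used, and a truncation argument would give the same conclusion.
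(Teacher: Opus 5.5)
Your proposal is correct and follows the same route as the paper. The paper's one-line proof cites exactly these ingredients: the identity $\rho_m=\int_0^\infty V n_m\,d\theta$, the weak-* convergences of $\rho_m$ and $n_m$, and the uniform compact support in $\theta$ from Lemma \ref{t.cptspt}. Your cutoff $V\chi$, and the check that $n_\infty$ puts no mass on $(\Theta_T,\infty)$, spell out details the paper leaves implicit.
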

\begin{proof}
    This follows from the fact that $\rho_m(t,x)=\int_{0}^{\infty}V(\theta)n_m(t,\theta,x)d\theta,$ the weak-* convergences of $\rho_m$ and $n_m$, and Lemma \ref{t.cptspt}.
\end{proof}
Remark that \eqref{rho.comp}, the equation on $\rho_m,$ can be re-written as \begin{equation}\label{v.eq}
\partial_t \rho_m -\Delta v_m=\int_{0}^{\infty}n_mF(\theta,p_m)d\theta.
\end{equation}
Using the convergences listed above, we are able to show that the left-hand side of equation \eqref{rho.comp} can be passed to the limit. However, to pass to the limit on the right-hand side, strong convergence in $p_m$ is required (because of the non-linearity $F$). As such, we are only able to show so far that the right hand side converges to some $L^\infty$ function, $\mathcal{H}_\infty.$ This is outlined in the following lemma.
\begin{lemma}[\cite{david}, Corollary 4.5]\label{weaklemma}
    The limit functions satisfy $0 \leq \rho_\infty \leq 1$, $0 \leq p_\infty \leq p_M$, $v_\infty=p_\infty,$ and $p_\infty(1-\rho_\infty)=0$ almost everywhere in $[0,T]\times \R^d.$ The limit functions, $p_\infty, \rho_\infty$ satisfy $p_\infty \in L^2(0,T;H^1(\R^d)),$ $\partial_t \rho_\infty \in L^2(0,T;H^{-1}(\R^d)),$ and solve
    \begin{equation}\label{lim.eq}
        \partial_t \rho_\infty -\Delta p_\infty =\mathcal{H}_\infty 
    \end{equation}
    in $\mathcal{D}'([0,T]\times\R^d)$,
    for some $\mathcal{H}_\infty(t,x) \in L^\infty([0,T]\times \R^d)$, where $\mathcal{H}_\infty$ is the weak-* limit in $L^\infty$ of $\int_{0}^{\infty}n_m(F(\theta,p_m)-\mu(\theta)V(\theta))d\theta.$
\end{lemma}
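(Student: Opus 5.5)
The plan follows the standard argument of \cite{david} (Corollary 4.5) and \cite{PerthameQuiros2014}, taking the uniform bounds established above as given. There are three steps: the pointwise bounds, the graph relation, and the passage to the limit in \eqref{v.eq}.

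\textbf{Step 1: pointwise bounds.} From the first lemma of this section, $0\le p_m\le p_M$ and $0\le\rho_m\le\rho_{m,M}=(\frac{m-1}{m}p_M)^{1/(m-1)}$. Since $\rho_{m,M}\to1$ as $m\to\infty$, and weak-* limits preserve a.e. bounds (test against nonnegative $L^1$ functions), we get $0\le\rho_\infty\le1$ and $0\le p_\infty\le p_M$.

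\textbf{Step 2: $v_\infty=p_\infty$.} We have
\[
v_m=\rho_m^m=\rho_m\,\tfrac{m-1}{m}\,p_m,\qquad |v_m-p_m|\le \tfrac1m p_m+|\rho_m-1|\,p_m .
\]
On the set $\{p_m>0\}$ the factor $\rho_m=(\frac{m-1}{m}p_m)^{1/(m-1)}$ is close to $1$ only where $p_m$ is not tiny. So I would split according to whether $p_m\ge\delta$:
\begin{itemize}
\item Where $p_m<\delta$, both $p_m$ and $v_m\le p_m$ are at most $\delta$.
\item Where $p_m\ge\delta$, we have $|\rho_m-1|\le C|\log\delta|/m$.
\end{itemize}
Combined with the uniform spatial support from Lemma \ref{lem.cptspt}, this gives $\|v_m-p_m\|_{L^1}\le C\delta+C_\delta/m$. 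Hence $v_m-p_m\to0$ in $L^1$, and $v_\infty=p_\infty$.

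\textbf{Step 3: the relation $p_\infty(1-\rho_\infty)=0$.} This is the main obstacle, because it is a product of two weakly converging sequences. I would first derive strong $L^2_{loc}$ compactness of one factor, then pass to the limit in the product.
\begin{itemize}
\item $\nabla v_m$ is bounded in $L^2$ (Lemma \ref{secondbds}) and $\partial_t\rho_m$ is bounded in $L^2(0,T;H^{-1})$. Since $v_m=\rho_m^m$ is a monotone function of $\rho_m$, I would apply the Aubin–Lions-type compensated compactness argument of \cite{david}. Concretely, integrate $\rho_m v_m$ against a test function and use \eqref{v.eq} with the $H^1$ bound on $v_m$. Together with the uniform compact support, this yields $\int \rho_m v_m\varphi\to\int\rho_\infty v_\infty\varphi$.
\item An alternative route is to show $v_m\to v_\infty$ strongly in $L^2$ through time-translate estimates derived from \eqref{v.eq}.
\item Then pass to the limit in $\rho_m v_m-v_m=v_m(\rho_m-1)$. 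Since $\rho_m\le\rho_{m,M}$, we have $v_m(\rho_m-1)\le v_m(\rho_{m,M}-1)\to0$. We also have $v_m(\rho_m-1)\ge -v_m(1-\rho_m)$, and $v_m(1-\rho_m)=\rho_m^m(1-\rho_m)\le\frac{1}{m+1}$, so this term tends to $0$ as well.
\item The limit therefore gives $p_\infty\rho_\infty=p_\infty$.
\end{itemize}

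\textbf{Step 4: limit equation.} By the Step 1 bound and Lemma \ref{secondbds}, $\nabla p_\infty\in L^2$ and $\partial_t\rho_\infty\in L^2H^{-1}$. We pass to the limit in \eqref{v.eq} in $\mathcal D'$:
\begin{itemize}
\item $\partial_t\rho_m\to\partial_t\rho_\infty$ and $\Delta v_m\to\Delta p_\infty$ distributionally.
\item The right-hand side is bounded in $L^\infty$, since $|F|\le C$ and $\int n_m\,d\theta\le\rho_m/V_0\le C$. By Banach–Alaoglu it converges weakly-* along a subsequence to some $\mathcal H_\infty\in L^\infty$.
\end{itemize}
This gives \eqref{lim.eq}.
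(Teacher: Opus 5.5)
Your proof is correct, but you organize the key identities differently from the paper.

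\textbf{The paper's route.} It applies the compensated compactness lemma of \cite{david} to the pair $(\rho_m,p_m)$ to get $v_\infty=\rho_\infty p_\infty$. It then shows $v_\infty\ge p_\infty$ by a convexity/weak lower semicontinuity argument with $\psi_\delta(x)=\max\{x-\delta,0\}$. The squeeze is closed by $\rho_\infty p_\infty\le p_\infty$, so $v_\infty=p_\infty$ and $p_\infty(1-\rho_\infty)=0$ come out together.

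\textbf{Your route.} You identify $v_\infty=p_\infty$ first, by showing $v_m-p_m\to0$ strongly. Only then do you use compensated compactness, on the pair $(\rho_m,v_m)$. This is legitimate, since $v_m\le\max\{p_M,1\}\,p_m$ and $\nabla v_m=\rho_m\nabla p_m$ give a uniform $L^2(0,T;H^1)$ bound. You combine it with your elementary estimate $|\rho_m v_m-v_m|\le C/m$.

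\textbf{What each buys.} Your route needs no convexity or lower-semicontinuity step; in the paper that step has to be read in an integrated sense against nonnegative test functions. It can even be shortened. Since $v_m-p_m=\rho_m^{m-1}\bigl(\rho_m-\frac{m}{m-1}\bigr)$, a one-variable computation gives $\|v_m-p_m\|_{L^\infty}\le C/(m-1)$. So neither your $\delta$-splitting nor the uniform compact support is required. The paper's route gets both identities from a single compensated-compactness limit of the most natural product, at the cost of the extra convexity argument.

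\textbf{One caution.} Discard your ``alternative route'' via time translates of $v_m$. The equation controls only $\partial_t\rho_m$. Transferring time regularity to $v_m=\rho_m^m$ through monotonicity costs the Lipschitz constant of $\rho\mapsto\rho^m$ on $[0,\rho_{m,M}]$, which is $(m-1)p_M$. So nothing uniform in $m$ comes out. Strong compactness of $v_m$ is exactly what the paper needs the later, harder Lemma~\ref{lem.strong_grad} for.
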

\begin{proof}
    The bounds on $\rho_\infty$ and $p_\infty$ follow from the $L^\infty$ bounds on $\rho_m$ and $p_m$. 
    
    To show that $p_{\infty}=p_{\infty}\rho_{\infty}$ almost everywhere, consider $v_m=\rho_m^m=\frac{m-1}{m}\rho_mp_m.$ From Lemma \ref{q.lemma}, we have that $\rho_m, p_m \in L^\infty(0,T;L^2(\R^d))$ with weak $L^2$ limits $\rho_\infty, p_\infty$. Further,  from Lemma \ref{secondbds}, $\partial_t \rho_m \in L^2(0,T;H^{-1}(\R^d)),$ and $p_m \in L^2(0,T;H^1(\R^d)).$ By the compensated compactness theorem (\cite{david}, Appendix A), up to a subsequence,
    \begin{equation*}
       \lim_{m\rightarrow \infty} \int_{0}^{T} \int_{\R^d} v_m \varphi \, dx dt=\lim_{m\rightarrow \infty}\int_{0}^{T}\int_{\R^d}\frac{m-1}{m}\rho_m p_m \varphi\,dx dt = \int_0^T \int_{\R^d} \rho_\infty p_\infty \varphi \, dx dt,
    \end{equation*}
    for all $\varphi \in C(0,T;C^1(\R^d)).$ But since $v_m \rightharpoonup v_\infty$ in $L^\infty(0,T;L^2(\R^d)),$ by uniqueness of the weak limit, $v_\infty =\rho_\infty p_\infty$ almost everywhere.

    Further, since $x \mapsto x^{\frac{m}{m-1}}$ is convex, by weak lower-semicontinuity, we can show that
    $$
    v_\infty  =\liminf_{m \rightarrow \infty}(\frac{m-1}{m}p_m)^{\frac{m}{m-1}}\geq p_\infty.$$
    To see this, define $\Psi_m(x)=\frac{m-1}{m}x^{\frac{m}{m-1}},$ and define $\psi_\delta(x)=\max \{x-\delta,0\}$. Notice that $\psi_\delta$ is convex and $\psi_\delta(x)\leq\Psi_m(x)$ for large $m$. Then, by lower-semicontinuity of $\psi_\delta$,
    $$
    \psi_\delta(p_\infty)\leq \liminf_{m\rightarrow\infty}\psi_\delta(p_m)\leq \liminf_{m \rightarrow\infty}\Psi_m(p_m)=\liminf_{m\rightarrow\infty}v_m=v_\infty.
    $$
    Taking $\delta \rightarrow 0$, we have $
    p_\infty \leq v_\infty.$
    At the same time, because $\rho_\infty \leq 1$, it holds that
    $$
    v_\infty =\rho_\infty p_\infty \leq p_\infty,$$
meaning that $p_\infty=v_\infty.$
  From here, we conclude that $p_\infty=\rho_\infty p_\infty$ a.e. In particular, this proves that $p_\infty$ satisfies \eqref{hs}.

    To show \eqref{lim.eq}, using the weak-* $L^\infty([0,T]\times\R^d)$ convergence of $\rho_m$ and the weak $L^2([0,T]\times\R^d)$ convergence of $\nabla v_m$, the left hand side of \eqref{v.eq} can be passed to the weak-* limit. On the right hand side, recall that $$\int_{0}^{\infty}n_mF(\theta,p_m)\, d\theta\leq C\rho_m,$$ so this term is bounded in $L^\infty([0,T]\times \R^d)$, and converges weakly-* to some limit $\mathcal{H}_\infty$ in $L^\infty([0,T]\times\R^d).$
\end{proof}
What remains is to identify the function $\mathcal{H}_\infty$, and pass to the limit in the equation on $n_m,$ \eqref{nm}. In  order to do so, strong convergence is required in both $p_m$ (to pass to the limit in the reaction term to identify $\mathcal{H}_\infty$) and $\nabla p_m$ (to pass to the limit in div$(n_m \nabla p_m)$). It is easier to first show that $\nabla v_m$ converges strongly to $\nabla v_\infty$, and because $v_\infty=p_\infty$, the strong convergence of $p_m$ and $\nabla p_m$ will follow easily.

The proof of the following lemma follows Lemma 4.6 in \cite{david}. Some parts of the proof are identical, but are included for completion. The main difference  in our case appears due to the transport in $\theta$, which means that it cannot be treated as a parameter. Because of this, the compensated compactness theorem cannot be directly applied, as it could in \cite{david}.
\begin{lemma}[\cite{david}, Lemma 4.6]\label{lem.strong_grad}
    Up to a subsequence, as $m \rightarrow \infty$, $\nabla v_m \rightarrow \nabla v_\infty$ strongly in $L^2((0,T)\times \R^d).$
\end{lemma}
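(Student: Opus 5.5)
We will follow the energy method of \cite{david}, Lemma 4.6 (itself adapted from \cite{PerthameQuiros2014}). The idea is to show
$$\limsup_{m\to\infty}\int_0^T\!\!\int_{\R^d}|\nabla v_m|^2\,dx\,dt\leq \int_0^T\!\!\int_{\R^d}|\nabla v_\infty|^2\,dx\,dt.$$
Combined with the weak convergence $\nabla v_m\rightharpoonup\nabla v_\infty$ in $L^2$, this gives strong convergence. By Lemma \ref{lem.cptspt} and Lemma \ref{t.cptspt}, all functions are supported in a fixed ball $B_{C_T}$ in $x$ and in $[0,\theta_{in}+r_{max}T]$ in $\theta$. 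So every integral may be taken over bounded sets, and weak-* $L^\infty$ convergence can be tested against functions that are merely bounded on these sets, for instance $1$ or $V(\theta)$.

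First, test \eqref{v.eq} against $v_m$ and integrate over $[0,T]\times\R^d$. Since $\partial_t\rho_m\, v_m=\partial_t \Psi(\rho_m)$ with $\Psi(\rho)=\frac{\rho^{m+1}}{m+1}$, we get
$$\int_0^T\!\!\int|\nabla v_m|^2=-\int\frac{\rho_m^{m+1}(T)}{m+1}+\int\frac{\rho_{0,m}^{m+1}}{m+1}+\int_0^T\!\!\int v_m\int_0^\infty n_mF(\theta,p_m)\,d\theta.$$
The two boundary terms are $O(1/m)$, because $\rho_m\leq\rho_{m,M}$, $\rho_{m,M}^{m}$ is bounded and the supports are uniformly bounded. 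In the limit, test \eqref{lim.eq} against $p_\infty=v_\infty$. Using the Hele-Shaw relation $p_\infty(1-\rho_\infty)=0$ and $\partial_t\rho_\infty\in L^2H^{-1}$, the standard argument of \cite{david} (which uses $p_\infty\partial_t\rho_\infty=0$ in the appropriate sense) yields $\int_0^T\!\int|\nabla p_\infty|^2=\int_0^T\!\int p_\infty\mathcal H_\infty$. It therefore suffices to prove
$$\limsup_m \int_0^T\!\!\int v_m\int_0^\infty n_mF(\theta,p_m)\,d\theta\leq\int_0^T\!\!\int p_\infty\mathcal H_\infty,$$
with $\mathcal H_\infty$ the weak limit of the reaction term as in Lemma \ref{weaklemma}.

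This product limit is the main obstacle. In \cite{david} it was handled by compensated compactness, with $y$ treated as a parameter, applied to $n_m(\cdot,y,\cdot)$ and $v_m$. Here the $\theta$-transport prevents that. My plan is to obtain strong $L^2_{t,x}$ compactness of $v_m$ (equivalently of $p_m$), so that only weak convergence of the reaction term is needed.

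Time compactness comes from the equation. Using \eqref{p.eq} for $\partial_t p_m$ would cost a factor $m$, so instead I will use the Aubin–Lions/compensated-compactness pairing. We have $\rho_m$ bounded with $\partial_t\rho_m\in L^2H^{-1}$, and $v_m$ bounded in $L^2H^1$, with $v_m=\rho_m^m$ a monotone function of $\rho_m$. A Minty-type argument then applies: the product $\rho_m v_m$ converges to $\rho_\infty v_\infty$ (the compensated compactness already used in Lemma \ref{weaklemma}). Since $\rho\mapsto\rho^m$ is increasing, the inequality $\int(\rho_m-\rho_\infty)(v_m-\rho_\infty^m)\geq 0$, together with $\rho_m\leq \rho_{m,M}\to1$, should show $\int v_m^2\to\int v_\infty^2$. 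Concretely, $v_m^2\leq \rho_{m,M}^{m}\,\rho_m v_m\cdot(\text{bounded factor})$, and $\rho_m v_m\to\rho_\infty p_\infty=p_\infty$ weakly, which gives $\limsup\int v_m^2\leq\int p_\infty^2$. This yields strong $L^2$ convergence $v_m\to v_\infty=p_\infty$, and hence $p_m\to p_\infty$ strongly, since $p_m-v_m\to0$ on the region where $p_m$ is bounded away from zero.

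With $v_m\to p_\infty$ strongly in $L^2$ and $\int_0^\infty n_mF(\theta,p_m)\,d\theta$ bounded in $L^\infty$ and converging weakly-* to $\mathcal H_\infty$, the product converges and the desired equality (in fact equality, not just the $\limsup$ inequality) follows. This closes the energy identity and gives the norm convergence of $\nabla v_m$, hence the strong convergence.
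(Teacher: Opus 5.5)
Your reduction is sound. Testing \eqref{v.eq} against $v_m$, discarding the $O(1/m)$ boundary terms, and using $\int_0^T\!\int|\nabla p_\infty|^2=\int_0^T\!\int p_\infty\mathcal H_\infty$ (the paper's argument for \eqref{t.3}) correctly reduces the lemma to passing to the limit in $\int_0^T\!\int v_m\int_0^\infty n_mF(\theta,p_m)\,d\theta$.

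The gap is in how you handle this product: the claimed strong $L^2$ compactness of $v_m$ does not follow. Since $v_m^2=\rho_m^{2m}=\rho_m^{m-1}\,\rho_m v_m=\frac{m-1}{m}p_m\,\rho_m v_m$, your bound replaces the factor $\frac{m-1}{m}p_m$ by the constant $p_M$. That yields only $\limsup\int v_m^2\le p_M\int p_\infty$, which is weaker than $\int p_\infty^2$. Keeping the factor $p_m$ instead would require exactly the strong convergence you are trying to prove. The Minty inequality can at best identify the limiting graph, i.e. the Hele-Shaw relation already obtained in Lemma \ref{weaklemma}. Because that graph is vertical at $\rho=1$, it gives no control of oscillations of $v_m$ on $\{\rho_\infty=1\}$.

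In fact the abstract information you invoke ($L^\infty$ bounds, $\partial_t\rho_m$ bounded in $L^2H^{-1}$, $v_m$ bounded in $L^2H^1$, $v_m=\rho_m^m$) cannot give time compactness of $v_m$. Take $0\le\phi\le1$ smooth and compactly supported, $p_m=\frac{p_M}{3}\phi(x)\bigl(2+\sin(\sqrt m\,t)\bigr)$, and $\rho_m=(\frac{m-1}{m}p_m)^{1/(m-1)}$. Then:
\begin{itemize}
\item $\nabla v_m=\rho_m\nabla p_m$ is bounded in $L^2$;
\item $|\partial_t\rho_m|\le\rho_{m,M}\sqrt m/(m-1)\to0$ uniformly;
\item $\rho_m v_m\rightharpoonup\rho_\infty v_\infty$.
\end{itemize}
Yet the weak limit of $v_m^2$ is $\frac{p_M^2}{2}\phi^2\neq\frac{4p_M^2}{9}\phi^2=v_\infty^2$, so no subsequence converges strongly. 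Strong compactness of $p_m$ must therefore come from the PDE. In the paper it is a \emph{consequence} of this lemma (via Poincar\'e and the uniform compact support), so your route is circular.

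What is missing is a way to pass to the limit in a product of two merely weakly convergent sequences. The paper tests against $v_m-v_\infty$ and splits $F(\theta,p_m)=\bigl(F(\theta,p_m)-F(\theta,p_\infty)\bigr)+F(\theta,p_\infty)$.

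The first piece is handled by monotonicity. Write $v_m-v_\infty=p_m(\frac{m-1}{m}\rho_m-1)+(p_m-p_\infty)$. The second summand gives a nonpositive contribution because $F$ is nonincreasing in $p$. The first is shown to be negligible by splitting on $\{\rho_m\le1-\varepsilon\}$ with $\varepsilon=(m-1)^{-1/2}$.

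The second piece, $\int w_m n_m$ with $w_m=(v_m-v_\infty)F(\theta,p_\infty)$, is a compensated-compactness argument done by hand, in three steps:
\begin{itemize}
\item Mollify $w_m$ in $x$; the error is $O(\sqrt\varepsilon)$ by the $L^2H^1$ bound.
\item Mollify $n_m$ in $t$; the error is $O(\sigma)+O(\sqrt\sigma)$ from the equation for $n_m$. The $\theta$-transport term is integrated by parts in $\theta$, producing the birth boundary term and $\partial_\theta w_m$. Both are controlled by the $\theta$-regularity of $F$ and the uniform compact support in $\theta$ and $x$.
\item The fully mollified $w_m$ tends to $0$ uniformly by Arzel\`a--Ascoli.
\end{itemize}
This is precisely where the $\theta$-structure enters. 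Your proposal identifies the obstacle correctly but then sidesteps it with a compactness claim that is false.
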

\begin{proof}
To show the strong convergence of $\nabla v_m,$
      \begin{align*}
        \limsup_{m  \rightarrow \infty}\int_{0}^{T}\int_{\R^d}|\nabla (v_m-v_\infty)|^2\, dx dt &=\limsup_{m \rightarrow \infty}\int_{0}^{T}\int_{\R^d}\nabla v_m\cdot \nabla (v_m-v_\infty)-\nabla v_\infty\cdot \nabla(v_m-v_\infty)\, dx dt  \nonumber \\&\leq \limsup_{m \rightarrow \infty}\int_{0}^{T}\int_{\R^d}\nabla v_m\cdot \nabla (v_m-v_\infty)\, dx dt,
    \end{align*}
    because $\nabla (v_m-v_\infty)\rightharpoonup 0$ weakly in $L^2([0,T]\times \R^d).$
    Testing \eqref{v.eq} against $v_m-v_\infty$, 
    \begin{align*}
       \int_{\R^d} \partial_t \rho_m(v_m-v_\infty)\, dx +\int_{\R^d}\nabla v_m\cdot \nabla(v_m-v_\infty)\, dx=\int_{\R^d}(v_m-v_\infty)\int_{0}^{\infty}n_mF(\theta,p_m)\, d\theta dx.
    \end{align*}
   
    Rearranging,
    \begin{align*}
        \limsup_{m \rightarrow \infty}\int_{0}^{T}\int_{\R^d}\nabla v_m\cdot \nabla (v_m-v_\infty)\, dx dt&=\limsup_{m \rightarrow \infty}\int_{0}^{T}\int_{\R^d}(v_m-v_\infty)\int_{0}^{\infty}F(\theta,p_m)n_m\, d\theta dx\\&+\limsup_{m\rightarrow\infty}(-\int_{0}^{T}\int_{\R^d}\partial_t \rho_m(v_m-v_\infty)\, dx dt).
    \end{align*}
    Notice that
    \begin{align*}
        \partial_t \rho_m v_m=\frac{1}{m+1}\partial_t(\rho_m^{m+1}),
    \end{align*}
    so 
    \begin{align*}
    \limsup_{m\rightarrow\infty}(- \int_{0}^{T}\int_{\R^d}\partial_t \rho_m v_m\, dx dt) &=\limsup_{m\rightarrow\infty}(-\frac{d}{dt}\int_{0}^{T}\int_{\R^d}\frac{1}{m+1}\rho_m^{m+1}\,dxdt) \\
        & =\limsup_{m\rightarrow\infty}\frac{1}{m+1}\int_{\R^d}\rho_m^{m+1}(0)-\rho_m^{m+1}(T)\, dx dt \\
        & = 0,
    \end{align*}
     since $\rho_m^{m+1}(T)$ and $\rho_m^{m+1}(0)$ are bounded in $L^1(\R^d)$ uniformly in $m$. Then,
    \begin{align}
        \limsup_{m \rightarrow \infty}\int_{0}^{T}\int_{\R^d}\nabla v_m\cdot \nabla (v_m-v_\infty)\, dx dt&=\limsup_{m \rightarrow \infty}\int_{0}^{T}\int_{\R^d}(v_m-v_\infty)\int_{0}^{\infty}F(\theta,p_m)n_m\, d\theta dx \nonumber \\&+\limsup_{m \rightarrow \infty}\int_{0}^{T}\int_{\R^d}\partial_t \rho_m v_\infty\, dx dt \nonumber \\
        &= \limsup_{m \rightarrow \infty}\int_{0}^{T}\int_{\R^d}(v_m-v_\infty)\int_{0}^{\infty}(F(\theta,p_m)-F(\theta,p_\infty))n_m\, d\theta dx dt \label{t.1}\\&+\limsup_{m \rightarrow \infty}\int_{0}^{T}\int_{\R^d}(v_m-v_\infty)\int_{0}^{\infty}F(\theta,p_\infty)n_m\,d\theta dx dt \label{t.2}\\
    &+\int_{0}^{T}\int_{\R^d}\partial_t \rho_\infty v_\infty\, dx dt, \label{t.3}
    \end{align}
    using the fact that of $\partial_t \rho_m $ is bounded in $L^2(0,T;H^{-1}(\R^d))$ to get \eqref{t.3}.

  To conclude the lemma, it remains to show that all three terms \eqref{t.1}, \eqref{t.2}, \eqref{t.3} are zero. The proofs of \eqref{t.1} and \eqref{t.3} follow closely the proof in \cite{david}. The proof of \eqref{t.2} is the main difference between this proof and the one in \cite{david}, and requires a generalized version of the compensated compactness theorem.
  
\medskip
 \noindent \textbf{Proof that \eqref{t.1} equals zero.} We have:
   
   \begin{align}\label{split}
       &\int_{0}^{T}\int_{\R^d}\int_{0}^{\infty}(F(\theta,p_m)-F(\theta,p_\infty))n_m(v_m-v_\infty)\,d\theta dx dt  \nonumber\\&=\int_{0}^{T}\int_{\R^d}\int_{0}^{\infty}(F(\theta,p_m)-F(\theta,p_\infty))n_m(p_m(\frac{m-1}{m}\rho_m-1)+p_m-p_\infty)\,d\theta dx dt \nonumber \\
       &\leq \int_{0}^{T}\int_{\R^d}\int_{0}^{\infty}(F(\theta,p_m)-F(\theta,p_\infty))n_m(p_m(\frac{m-1}{m}\rho_m-1))\, d\theta dx dt.
   \end{align}
  In the above calculation, the first line comes from the definition of $v_m$, and the fact that $p_\infty=v_\infty$. The second line comes from the fact that $F(\theta,p)$ is nonincreasing in $p$ so $(F(\theta,p_m)-F(\theta,p_\infty))(p_m-p_\infty)\leq 0.$
  Then, for a fixed $\varepsilon>0,$
  \begin{align*}
    \int_{0}^{T}\int_{\R^d}\int_{0}^{\infty}&(F(\theta,p_m)-F(\theta,p_\infty))n_m(p_m(\frac{m-1}{m}\rho_m-1))\, d\theta dx dt\\&=\iint_{\rho_m\leq 1-\varepsilon}\int_{0}^{\infty}(F(\theta,p_m)-F(\theta,p_\infty))n_m\,d\theta (\rho_m^{m-1}(\rho_m-\frac{m}{m-1}))\,dxdt \\&+\iint_{\rho_m> 1-\varepsilon}\int_{0}^{\infty}(F(\theta,p_m)-F(\theta,p_\infty))n_m\,d\theta (p_m(\frac{m-1}{m}\rho_m-1))\,dxdt. 
  \end{align*}
  For the first term,

\begin{align*}
    \iint_{\rho_m\leq 1-\varepsilon}\int_{0}^{\infty} &(F(\theta,p_m)-F(\theta,p_\infty))n_m\,d\theta (\rho_m^{m-1}(\rho_m-\frac{m}{m-1}))\,dxdt \\
    & \leq C\rho_M(1-\varepsilon)^{m-1}\iint_{\rho_m\leq 1-\varepsilon}\rho_m +\frac{m}{m-1}\,dxdt \\
    & \leq C(T)(1-\varepsilon)^{(m-1)},
\end{align*}
using the bound on $F$ in the first inequality, and the $L^1((0,T)\times\R^d)$ bound on $\rho_m$, and the compact support of $\rho_m$ in the second inequality.
Also,

  \begin{align*}
      \iint_{\rho_m> 1-\varepsilon}\int_{0}^{\infty}&(F(\theta,p_m)-F(\theta,p_\infty)) n_m\,d\theta (p_m(\frac{m-1}{m}\rho_m-1))\,dxdt \\& \leq C\rho_M\iint_{\rho_m>1-\varepsilon}p_m|\frac{m-1}{m}\rho_m-1|\,dxdt \\
      & \leq C(T)\sup_{\rho_m>1-\varepsilon}|\frac{m-1}{m}\rho_m-1|,
  \end{align*}
  using the $L^1([0,T]\times \R^d)$ bound on $p_m$.
 To bound $\sup_{\rho_m>1-\varepsilon}|\frac{m-1}{m}\rho_m-1|$, first notice that $\rho_m\leq e^{\frac{1}{m-1}\ln(p_M)}\leq 1+\frac{1}{m-1}\ln(p_M)+o(\frac{1}{m-1})$ and so $\frac{m-1}{m}\rho_m-1\leq o(\frac{1}{m-1}).$ 
 
 Also, when $\rho_m-1>-\varepsilon,$ $\frac{m-1}{m}\rho_m-1> -\varepsilon-\frac{1}{m}\rho_M.$ Combining these bounds, for $\rho_m>1-\varepsilon,$ it holds that
  
  $$
  |\frac{m-1}{m}\rho_m-1|\leq \max(\frac{1}{m}\rho_M+\varepsilon,o(\frac{1}{m-1})).$$ 

  Taking $\varepsilon=\frac{1}{\sqrt{m-1}},$

  \begin{align*}
      \limsup_{m\rightarrow\infty}\int_{0}^{T}\int_{\R^d}&\int_{0}^{\infty}(F(\theta,p_m)-F(\theta,p_\infty))n_m(p_m(\frac{m-1}{m}\rho_m-1))\, d\theta dx dt\\& \leq \limsup_{m \rightarrow \infty}C(T)(1-\frac{1}{\sqrt{m-1}})^{m-1}+C(T)\max(\frac{1}{m}\rho_M+\frac{1}{\sqrt{m-1}}, o(\frac{1}{m-1})) = 0.
  \end{align*}

  \medskip
  \noindent \textbf{Proof that \eqref{t.2} equals zero.} This will be done using an extended version of the proof of the Compensated Compactness Theorem found in Appendix A of \cite{david}, and is the main difference between this Lemma and Lemma 4.6 in \cite{david}.

 For simplicity of notation, define $w_m(t,\theta,x)=(v_m-v_\infty)F(\theta,p_\infty)$.
 Recall that $w_m \in L^\infty([0,T]\times \R^d; W^{1,\infty}([0,\infty)))\cap L^\infty(0,\infty;L^2(0,T;H^1(\R^d))) $ and $w_m \rightharpoonup 0$ weakly in that space. In what follows, we will use mollification to obtain a strong convergence, and show that the product $w_m n_m$ converges.
 We have 
 $$
 \int_{0}^{T}\int_{\R^d}\int_{0}^{\infty}(v_m-v_\infty)F(\theta,p_\infty)n_md\theta dx dt =\int_{0}^{T}\int_{\R^d}\int_{0}^{\infty}w_m n_m d\theta dxdt.
 $$
 Take $\psi_\varepsilon(x)=\frac{1}{\varepsilon^d}\psi(\frac{x}{\varepsilon}),$ and $\zeta_\sigma (t)=\frac{1}{\sigma}\zeta(\frac{t}{\sigma}),$ where $\psi$ and $\zeta$ are standard mollifiers.

 Then,
 \begin{align*}
     \int_{0}^{T}\int_{\R^d}\int_{0}^{\infty}w_mn_md\theta dx dt &=\underbrace{\int_{0}^{T}\int_{\R^d}\int_{0}^{\infty}n_m(w_m -w_m*_x \psi_\varepsilon)d\theta dx dt}_{\mathcal{A}_m}+\\&\underbrace{\int_{0}^{T}\int_{\R^d}\int_{0}^{\infty}(n_m-n_m *_t \zeta_\sigma)(w_m *_x \psi_\varepsilon) d\theta dx dt }_{\mathcal{B}_m}
     \\&+\underbrace{\int_{0}^{T}\int_{\R^d}\int_{0}^{\infty}(n_m *_t \zeta_\sigma)(w_m *_x\psi_\varepsilon)d\theta dx dt}_{\mathcal{C}_m}.
 \end{align*}
 We will show that 
 $$
 \limsup_{m \rightarrow \infty}|\int_{0}^{T}\int_{\R^d}\int_{0}^{\infty}w_mn_md\theta dx dt|\leq C(\varepsilon,\sigma),
 $$
 where $\lim_{\varepsilon,\sigma \rightarrow 0}C(\varepsilon,\sigma)=0,$
 proving that \eqref{t.2} is zero. 
 
 First, consider the term $\mathcal{C}_m:$ 
 \begin{align*}
    |\mathcal{C}_m|&= |\int_{0}^{T}\int_{\R^d}\int_{0}^{\infty}(n_m *_t \zeta_\sigma)(w_m *_x\psi_\varepsilon)d\theta dx dt|\\&=|\int_{0}^{T}\int_{\R^d}\int_{0}^{\infty}n_m w_m *_x \psi_\varepsilon *_t \zeta_\sigma d\theta dx dt| \\
     & \leq ||w_m *_x \psi_\varepsilon *_t \zeta_\sigma ||_{L^\infty([0,T]\times[0,\infty)\times\R^d)}||n_m||_{L^\infty([0,T]\times\R^d;L^1([0,\infty)))}.
 \end{align*}
 
 Thanks to the mollification, for any $\varepsilon, \sigma>0,$ $w_m *_x \psi_\varepsilon*_t \zeta_\sigma$ has bounded derivatives in $x$ and $t$, so,  

 $$
 w_m *_x \psi_\varepsilon*_t \zeta_\sigma \in W^{1,\infty}([0,T]\times[0,\infty)\times\R^d).
 $$

By Arzela-Ascoli, and using the fact that $w_m$ is compactly supported in all three variables (from Lemmas \ref{lem.cptspt}, \ref{t.cptspt}), $w_m *_x \psi_\varepsilon*_t \zeta_\sigma$ converges  uniformly up to a subsequence. Using the fact that $w_m \rightharpoonup 0$ weakly, it must be that $w_m *_x \psi_\varepsilon *_t \zeta_\sigma \rightarrow 0$ uniformly.   

Using the fact that $n_m$ is bounded in $L^\infty([0,T]\times \R^d ;L^1([0,\infty))$ uniformly in $m$, it follows that
$$
\limsup_{m\rightarrow \infty}|\mathcal{C}_m|=0.
$$

Next, consider the term $\mathcal{A}_m:$
\begin{align*}
|\mathcal{A}_m|&=|\int_{0}^{T}\int_{\R^d}\int_{0}^{\infty}n_m(w_m -w_m*_x \psi_\varepsilon)d\theta dx dt|\\&=|\int_{0}^{T}\int_{\R^d}\int_{0}^{\infty}\Bigl(\int_{\R^d}(w_m(t,\theta,x)-w_m(t,\theta,x-\varepsilon y))\psi(y)dy\Bigr)n_m(t,\theta,x)d\theta dx dt |\\&= |\int_{0}^{T}\int_{\R^d}\int_{0}^{\infty}\int_{\R^d}\int_{x-\varepsilon y}^{x}\nabla_x w_m(t,\theta,z)dz\psi(y)n_m(t,\theta,x) dy d\theta dx dt| \\
& \leq \int_{0}^{T}\int_{\R^d}\int_{0}^{\infty}\int_{\R^d}||\nabla_x w_m(t,\theta,\cdot)||_{L^2(\R^d)}(\varepsilon |y|)^{\frac{1}{2}}n_m(t,\theta,x)dyd\theta dx dt\\
& \leq \int_{\R^d}||\nabla_x w_m||_{L^\infty(0,\infty; L^2([0,T]\times \R^d))}(\varepsilon |y|)^{\frac{1}{2}}\psi(y)||n_m||_{L^\infty(0,T;L^1([0,\infty)\times \R^d))}dy \\
&\leq C\int_{\R^d}\sqrt{(\varepsilon|y|)}\psi(y)dy \\
& \leq C\sqrt{\varepsilon},
\end{align*}
 using the fact that $w_m \in L^\infty(0,\infty;L^2(0,T;H^1(\R^d)),$ and $n_m \in L^\infty(0,T;L^1([0,\infty)\times \R^d),$ both uniformly in $m$. So, 
 $$\limsup_{m\rightarrow \infty}|\mathcal{A}_m|\leq C\sqrt{\varepsilon},$$
 for a constant $C$ not depending on $m$.
Lastly, consider the term $\mathcal{B}_m:$
\begin{align*}
|\mathcal{B}_m|&=|\int_{0}^{T}\int_{\R^d}\int_{0}^{\infty}(n_m-n_m *_t \zeta_\sigma)(w_m *_x \psi_\varepsilon) d\theta dx dt| \\&= |\int_{0}^{T}\int_{\R^d}\int_{0}^{\infty}\Bigl(\int_{0}^{\infty}(n_m(t)-n_m(t-\sigma \tau) )\zeta(\tau)d\tau\Bigr)(w_m *_x \psi_\varepsilon) d\theta dx dt|\\
 &= |\int_{0}^{T}\int_{\R^d}\int_{0}^{\infty}\Bigl(\int_{0}^{\infty}\int_{t-\sigma \tau}^{t}\partial_t n_m(s)ds\zeta(\tau)d\tau\Bigr)(w_m *_x \psi_\varepsilon) d\theta dx dt| \\
 &= |\int_{0}^{T}\int_{\R^d}\int_{0}^{\infty}\int_{0}^{\infty}\int_{t-\sigma \tau}^{t}(\underbrace{-r(p_m)\partial_\theta n_m (s)}_{i}+\underbrace{\div(n_m \nabla p_m(s))}_{ii}\\&\underbrace{-r(p_m)\nu(\theta,p_m)n_m(s)-\mu(\theta)n_m(s)}_{iii})ds \zeta(\tau)d\tau (w_m*_x \psi_\varepsilon)d\theta dx dt|.
\end{align*}
For $i,$ taking the $\theta$ integral and integrating by parts,
\begin{align*}
    |&\int_{0}^{T}\int_{\R^d}\int_{0}^{\infty}\int_{0}^{\infty}\int_{t-\sigma \tau}^{t}-r(p_m(s,x))\partial_\theta n_m(s,\theta,x)\zeta(\tau)(w_m *_x \psi_\varepsilon)(t,\theta,x)dsd\tau d\theta dx dt| \\
    &=|\int_{0}^{T}\int_{\R^d}\int_{0}^{\infty}\int_{t-\sigma \tau}^{t}[(w_m*_x \psi_\varepsilon)(t,0,x)2r(p_m(s,x))\int_{0}^{\infty}\nu(\theta,p_m)n_m(s,\theta,x)d\theta\\&+r(p_m(s,x))\int_{0}^{\infty}n_m(s,\theta,x)\partial_\theta (w_m *_x \psi_\varepsilon)(t,\theta,x)d\theta] \zeta(\tau) ds d\tau dx dt |\\
    &\leq \Bigl(||C(w_m *_x \psi_\varepsilon) n_m||_{L^\infty([0,T]\times \R^d;L^1([0,\infty)))}\\&+||Cn_m \partial_\theta (w_m *_x \psi_\varepsilon)||_{L^\infty([0,T]\times \R^d; L^1([0,\infty)))}\Bigr)\int_{0}^{T}\int_{0}^{\infty}\sigma \tau \zeta(\tau)d\tau dt \\
    & \leq C(T)\int_{0}^{\infty}\sigma \tau \zeta(\tau) d\tau  \\
    & \leq C\sigma,
\end{align*}
using that $\partial_\theta w_m, w_m
\in L^\infty([0,T]\times[0,\infty)\times\R^d)$, and $n_m \in L^\infty([0,T]\times \R^d;L^1([0,\infty)))$, bounded uniformly in $m$, and the compact support of solutions.
For $ii$, taking the integral in $x$,
\begin{align*}
    |&\int_{0}^{T}\int_{\R^d}\int_{0}^{\infty}\int_{0}^{\infty}\int_{t-\sigma \tau}^{t}\div(n_m\nabla p_m(s))ds \zeta(\tau) d\tau (w_m *_x \psi_\varepsilon)d\theta dx dt |\\
    &= |-\int_{0}^{T}\int_{\R^d}\int_{0}^{\infty}\int_{0}^{\infty}\int_{t-\sigma \tau}^{t}n_m(s) \nabla p_m(s)\nabla (w_m *_x \psi_\varepsilon)(t)\zeta(\tau) ds d\tau d\theta dx dt |\\
    & \leq \int_{0}^{\infty}\int_{t-\sigma \tau}^{t} ||\rho_m(s)||_{L^\infty(\R^d)}||\nabla p_m (s)||_{L^2(\R^d)}||\nabla (w_m *_x \psi_\varepsilon)(t)||_{L^\infty(0,\infty;L^2(\R^d))} \zeta(\tau)d\tau ds dt\\
    & \leq C(T)\int_{0}^{\infty}||\rho_m||_{L^\infty([0,T]\times\R^d}||\nabla p_m||_{L^2([0,T]\times \R^d)}||\nabla (w_m *_x \psi_\varepsilon )||_{L^\infty(0,\infty;L^2([0,T]\times \R^d))}\zeta(\tau)\sqrt{\sigma \tau} d\tau \\
    & \leq C\int_{0}^{\infty}\sqrt{\sigma \tau} \zeta(\tau)d\tau \\
    & \leq C\sqrt{\sigma},
\end{align*}
using that $\nabla p_m \in L^2([0,T]\times \R^d), $ $\nabla (w_m *_x \psi_\varepsilon) \in L^\infty(0,\infty;L^2([0,T]\times\R^d))$ and $n_m \in L^\infty([0,T]\times 
\R^d;L^1([0,\infty))),$ bounded uniformly in $m$.
For $iii$,
\begin{align*}
    &|\int_{0}^{T}\int_{\R^d}\int_{0}^{\infty}\int_{0}^{\infty}\int_{t-\sigma \tau}^{t}(-r(p_m)\nu(\theta,p_m)-\mu(\theta))n_m(s)\zeta(\tau)(w_m *_x \psi_\varepsilon)dsd\tau d\theta dx dt| \\
    &\leq C\int_{0}^{\infty}\tau \sigma \zeta(\tau)d\tau \\
    & \leq C\sigma,
\end{align*}
using that $(w_m *_x \psi_\varepsilon)\in L^\infty([0,T]\times[0,\infty)\times \R^d)$ and $(r(p_m)\nu(\theta,p_m)-\mu(\theta))n_m \in L^\infty([0,T]\times \R^d;L^1([0,\infty))),$ bounded uniformly in $m$, and the compact support of solutions. So,
$$
\limsup_{m\rightarrow \infty}|\mathcal{B}_m|\leq C\sigma,
$$
for a constant $C$ independent of $m$.

Combining $\mathcal{A}_m,\mathcal{B}_m,$ and $\mathcal{C}_m$, we have 
$$
\limsup_{m\rightarrow \infty}|\int_{0}^{T}\int_{\R^d}\int_{0}^{\infty}w_mn_m d\theta dx dt| \leq \limsup_{m\rightarrow \infty}|\mathcal{A}_m|+|\mathcal{B}_m|+|\mathcal{C}_m|\leq C\sqrt{\varepsilon}+C\sigma,
$$
for constants $C$ not depending on $m$. Taking $\varepsilon, \sigma \rightarrow 0,$
we have that \eqref{t.2} is equal to zero.

  \medskip
  \noindent \textbf{Proof that \eqref{t.3} equals zero.} 
  Formally, this should hold because $v_\infty >0$ only when $\rho_\infty =1,$ i.e., $\partial_t \rho_\infty =0.$
However, because $\partial_t \rho_\infty $ is only in $ L^2(0,T;H^{-1}(\R^d))$, this needs to be justified. 
 This proof follows closely from \cite{david}, where further details can be found. First, notice that
  \begin{align*}
     \frac{1}{\varepsilon} \int_0^T\int_{\R^d}(\rho_{\infty}(t+\varepsilon,x)-\rho_{\infty}(t,x))v_{\infty}\,dxdt&=\frac{1}{\varepsilon}\int_{0}^{T}\int_{\R^d}(\rho_{\infty}(t+\varepsilon,x)-1)v_\infty+(1-\rho_{\infty}(t,x))v_{\infty}\,dxdt  
      \leq 0,
  \end{align*}
  because $(1-\rho_\infty)v_\infty=0$ and  $\rho_\infty \leq 1$. Similarly,
  and
  \begin{align*}
     \frac{1}{\varepsilon} \int_\varepsilon^T\int_{\R^d}(\rho_{\infty}(t,x)-\rho_{\infty}(t-\varepsilon,x))v_{\infty}\,dxdt=\frac{1}{\varepsilon}\int_{\varepsilon}^{T}\int_{\R^d}(\rho_{\infty}(t,x)-1)v_\infty+(1-\rho_{\infty}(t-\varepsilon,x))v_{\infty}\,dxdt
      &\geq 0.
  \end{align*}
  
  Recall from Lemma \ref{weaklemma} that $v_\infty \in L^2(0,T;H^1(\R^d))$ and $\partial_t \rho_\infty \in L^2(0,T;H^{-1}(\R^d)).$ Then, 
  \begin{align*}
 0&= \lim_{\varepsilon \rightarrow 0} \int_{0}^{T}\int_{\R^d}(\rho_\infty(t+\varepsilon,x)-\rho_\infty(t,x))v_\infty dx dt \\&=\lim_{\varepsilon \rightarrow 0}\int_{0}^T \langle\frac{1}{\varepsilon}(
  \rho_\infty(t+\varepsilon,x)-\rho_\infty(t,x),v_\infty \rangle_{H^{-1}(\R^d),H^1(\R^d)}dt\\&=\int_{0}^{T}\langle \partial_t \rho_\infty (t,x),v_\infty \rangle_{H^{-1}(\R^d),H^1(\R^d)} dt \\
  &=\int_{0}^{T}\int_{\R^d}\partial_t \rho_\infty v_\infty dx dt.
  \end{align*}
 
  Because \eqref{t.1}, \eqref{t.2} and \eqref{t.3} are all $0$, the proof of the lemma is complete.
 
\end{proof}

\begin{lemma}[\cite{david}, Corollary 4.7]
    Up to a subsequence, $p_m \rightarrow p_\infty$ strongly in $L^2([0,T]\times \R^d).$
\end{lemma}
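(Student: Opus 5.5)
We will deduce the strong convergence of $p_m$ from the strong convergence of $\nabla v_m$ given by Lemma \ref{lem.strong_grad}, together with the identity $v_\infty = p_\infty$ from Lemma \ref{weaklemma}. The idea is that $v_m=\rho_m^m$ and $p_m=\frac{m}{m-1}\rho_m^{m-1}$ are close to each other wherever $\rho_m$ is close to $1$, and both are small wherever $\rho_m$ is bounded away from $1$. Thus $\|p_m-v_m\|_{L^2}\to 0$, and it suffices to prove that $v_m\to v_\infty$ strongly in $L^2$.

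\textbf{Step 1: $v_m\to v_\infty$ strongly in $L^2([0,T]\times\R^d)$.} Lemma \ref{lem.strong_grad} gives strong convergence of $\nabla v_m$ in $L^2$. By Lemma \ref{lem.cptspt}, all $v_m$ are supported in a fixed ball $B_{C_T}$, so Poincaré-type control in space is available. For compactness in time we use \eqref{v.eq}: $\partial_t\rho_m$ is bounded in $L^2(0,T;H^{-1})$. The plan is to write
\[
\|v_m-v_\infty\|_{L^2}^2=\int_0^T\!\!\int_{\R^d} v_m(v_m-v_\infty)\,dx\,dt-\int_0^T\!\!\int_{\R^d} v_\infty(v_m-v_\infty)\,dx\,dt .
\]
The second term tends to $0$ by weak convergence. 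For the first term, write $v_m=\frac{m-1}{m}\rho_m p_m$ and apply the compensated compactness argument of Lemma \ref{weaklemma} (\cite{david}, Appendix A) to the pair $\rho_m$ (time derivative bounded in $L^2H^{-1}$) and $v_m-v_\infty$ (bounded in $L^2H^1$). Alternatively, one can apply an Aubin–Lions/Lions–Magenes type argument directly. Since $v_m(v_m-v_\infty)$ is not exactly of that product form, a cleaner route is the following. First, $\int \rho_m(v_m-v_\infty)\to \int\rho_\infty\cdot 0$ weakly-strongly, giving $\int \rho_m v_m\to\int\rho_\infty v_\infty=\int v_\infty$. Then $v_m^2\le \rho_M^{m}\,\rho_m v_m$ with $\rho_M^m\to 1$, which yields $\limsup\|v_m\|_{L^2}^2\le \int\rho_\infty v_\infty$. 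Since $v_\infty=\rho_\infty v_\infty$ and $v_\infty\le p_M$, this must be compared with $\|v_\infty\|^2$, and that comparison is where the argument gets delicate.

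Because of this subtlety, the route I would actually commit to is the one used in \cite{david}. Strong $L^2$ convergence of $\nabla v_m$, compact support uniform in $m$, and an $L^1$ time-translation estimate for $v_m$ together give compactness via Fréchet–Kolmogorov. The translation estimate is obtained from the time-translation estimate for $\rho_m$ (from $\partial_t\rho_m\in L^2H^{-1}$, tested against $\nabla v_m$-controlled functions) combined with the monotone relation between $v_m$ and $\rho_m$: for a nondecreasing function $\phi$, $|\phi(a)-\phi(b)|^2\le L|\phi(a)-\phi(b)||a-b|\cdot$const after using $(\phi(a)-\phi(b))(a-b)\ge0$. Concretely,
\[
\int_0^{T-h}\!\!\int (v_m(t+h)-v_m(t))(\rho_m(t+h)-\rho_m(t))\le C\sqrt h,
\]
obtained by integrating $\partial_t\rho_m$ against $v_m(t+h)-v_m(t)$ and using the $L^2H^1$ bound. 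This controls $\int|\rho_m^{m}(t+h)-\rho_m^m(t)|^{1+1/m}$-type quantities uniformly, since $a\mapsto a^m$ satisfies $(a^m-b^m)(a-b)\ge c|a^m-b^m|^2$ for $a,b\le\rho_{m,M}$ up to an $O(1/m)$ loss. Combined with the strong compactness of $\nabla v_m$, we conclude that $v_m\to v_\infty$ strongly in $L^2$, since the limit is identified by weak convergence.

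\textbf{Step 2: $p_m-v_m\to0$ in $L^2$.} We have $p_m-v_m=\rho_m^{m-1}\bigl(\tfrac{m}{m-1}-\rho_m\bigr)$. Split according to the sets $\{\rho_m\le 1-\varepsilon\}$ and $\{\rho_m>1-\varepsilon\}$, exactly as in the proof that \eqref{t.1} vanishes. On the first set the difference is bounded by $C(1-\varepsilon)^{m-1}$. On the second it is bounded by $p_M\sup|\tfrac{m}{m-1}-\rho_m|\le C(\varepsilon+\tfrac1m+\tfrac{\ln p_M}{m-1})$. Taking $\varepsilon=(m-1)^{-1/2}$ and using the uniform compact support (Lemma \ref{lem.cptspt}), we get $\|p_m-v_m\|_{L^2}\to0$. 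Combined with Step 1 and $v_\infty=p_\infty$, this gives $p_m\to p_\infty$ strongly in $L^2([0,T]\times\R^d)$.

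The main obstacle is Step 1: obtaining time compactness for the nonlinear quantity $v_m$ uniformly in $m$. I expect the monotonicity/translation argument to work, with the compact support used to pass from local to global $L^2$.
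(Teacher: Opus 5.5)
Your Step 2 is correct. It is a more explicit version of the paper's one-line passage from $v_m$ to $p_m=\frac{m}{m-1}v_m^{(m-1)/m}$.

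The gap is in Step 1, in the route you commit to. You want to turn
\[
\int_0^{T-h}\!\!\int_{\R^d}(v_m(t+h)-v_m(t))(\rho_m(t+h)-\rho_m(t))\,dx\,dt\le C\sqrt h
\]
into a translation estimate for $v_m$. For that you need $(a^m-b^m)(a-b)\ge c\,|a^m-b^m|^2$ with $c$ independent of $m$, and this is false. The Lipschitz constant of $s\mapsto s^m$ on $[0,\rho_{m,M}]$ is $(m-1)p_M$. For $a=1$, $b=1-\delta$ with $\delta\ll 1/m$, one has $a^m-b^m\approx m\delta$, so the best constant is $c\approx 1/m$.

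The variant with exponent $1+\frac1m$ fails for the same reason. It would need $a-b\ge c\,(a^m-b^m)^{1/m}$, but in fact $(a^m-b^m)^{1/m}\ge a-b$ always. For $a=1$, $b=1-\frac1m$, the left side is close to $1$ while $a-b=\frac1m$. So you get no time-translation bound uniform in $m$, and the Fr\'echet--Kolmogorov argument does not close.

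The missing observation is that no time compactness is needed. Lemma \ref{lem.strong_grad} gives \emph{strong} convergence $\nabla v_m\to\nabla v_\infty$ in $L^2$, not merely a bound. By Lemma \ref{lem.cptspt}, both $v_m(t)$ and the weak limit $v_\infty(t)$ vanish outside a fixed ball $B_R$ for a.e.\ $t$. Apply the Poincar\'e inequality in $H^1_0(B_{R+1})$ on each time slice and integrate in $t$:
\[
\|v_m-v_\infty\|_{L^2((0,T)\times\R^d)}\le C_R\,\|\nabla(v_m-v_\infty)\|_{L^2((0,T)\times\R^d)}\longrightarrow 0 .
\]
This is exactly the paper's argument. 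Oscillations in time cannot persist, because each time slice is controlled by its own gradient. With this replacement for Step 1, your Step 2 completes the proof.
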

\begin{proof}
    From Lemma \ref{lem.strong_grad}, and Lemma \ref{lem.cptspt}, and Poincare's inequality, $v_m \rightarrow v_\infty$ strongly in $L^2([0,T]\times \R^d).$ Then since $p_m=\frac{m-1}{m}v_m^\frac{m-1}{m},$ $p_m \rightarrow v_\infty=p_\infty$ as $m \rightarrow \infty$, strongly in $L^2([0,T]\times \R^d)$.
\end{proof}
\begin{lemma}
     Up to a subsequence, $\nabla p_m \rightarrow \nabla p_\infty$ strongly in $L^2([0,T]\times \R^d).$
\end{lemma}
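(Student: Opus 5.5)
We deduce strong convergence of $\nabla p_m$ from that of $\nabla v_m$ (Lemma \ref{lem.strong_grad}). We use the pointwise relation $\nabla v_m = \rho_m \nabla p_m$, together with $v_\infty = p_\infty$ and $p_\infty(1-\rho_\infty)=0$ (Lemma \ref{weaklemma}). The outline is as follows.

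\textbf{Step 1: the norms converge.} Since $\nabla p_m \rightharpoonup \nabla p_\infty$ weakly in $L^2$, it suffices to show
\[
\limsup_{m\to\infty}\|\nabla p_m\|_{L^2}\le \|\nabla p_\infty\|_{L^2}.
\]
Because $v_\infty=p_\infty$, the target is $\|\nabla v_\infty\|_{L^2}=\lim_m \|\nabla v_m\|_{L^2}$, so we need to compare $|\nabla p_m|^2$ with $|\nabla v_m|^2=\rho_m^2|\nabla p_m|^2$.

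\textbf{Step 2: split on the size of $\rho_m$.} Fix $\varepsilon>0$.

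On $\{\rho_m\ge 1-\varepsilon\}$ we have
\[
|\nabla p_m|^2\le (1-\varepsilon)^{-2}|\nabla v_m|^2,
\]
so this part of $\|\nabla p_m\|_{L^2}^2$ is bounded by $(1-\varepsilon)^{-2}\|\nabla v_m\|_{L^2}^2$.

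On $\{\rho_m<1-\varepsilon\}$ the pressure is exponentially small: $p_m\le \frac{m}{m-1}(1-\varepsilon)^{m-1}$. We must show that the gradient energy there also vanishes. Set $p_m^\varepsilon := \min\{p_m,\ \tfrac{m}{m-1}(1-\varepsilon)^{m-1}\}$ and write $\delta_m$ for this exponentially small level. Testing the pressure equation \eqref{p.eq} (or equivalently \eqref{q.norm} with a truncation) against the truncated pressure should give
\[
(m-2)\iint |\nabla p_m^\varepsilon|^2 \le C\Bigl(\delta_m\iint \mathbf 1_{p_m>0} + m\,\delta_m \iint p_m\Bigr),
\]
with the right side tending to $0$ as $m\to\infty$ by the uniform compact support (Lemma \ref{lem.cptspt}) and the $L^1$ bounds. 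Note that $\{\rho_m<1-\varepsilon\}=\{p_m<\delta_m\}$, where $\nabla p_m = \nabla p_m^\varepsilon$ a.e. Hence $\iint_{\rho_m<1-\varepsilon}|\nabla p_m|^2\to 0$.

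\textbf{Step 3: an alternative, more robust route.} We expect the truncated energy estimate in Step 2 to be the main technical obstacle, since the time-boundary terms and the reaction term must be controlled with exponentially small weights. If it proves delicate, we argue differently. By Lemma \ref{lem.strong_grad}, $\nabla v_m\to\nabla p_\infty$ a.e. along a subsequence, and by the previous lemma $p_m\to p_\infty$ a.e. We would additionally need $\rho_m\to 1$ a.e. on $\{p_\infty>0\}$; this follows because $p_m\to p_\infty>0$ forces $\rho_m=(\frac{m-1}{m}p_m)^{1/(m-1)}\to 1$. Then $\nabla p_m=\nabla v_m/\rho_m$ converges a.e. on $\{p_\infty>0\}$.

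On $\{p_\infty=0\}$ we have $\nabla p_\infty=0$ a.e., so it remains to show $\iint_{\{p_\infty=0\}}|\nabla p_m|^2\to 0$. This is exactly the small-pressure energy estimate of Step 2, now localized via $\{p_m\ \text{small}\}$. Either way, this is the crux.

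\textbf{Step 4: conclude.} Combining Steps 1 and 2 gives
\[
\limsup_m\|\nabla p_m\|_{L^2}^2\le (1-\varepsilon)^{-2}\|\nabla p_\infty\|_{L^2}^2+o(1).
\]
Letting $\varepsilon\to 0$ and using weak convergence plus convergence of norms yields strong convergence in $L^2([0,T]\times\R^d)$.
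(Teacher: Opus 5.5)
Your overall strategy is sound: weak convergence plus $\limsup_m\|\nabla p_m\|_{L^2}\le\|\nabla p_\infty\|_{L^2}$, a split at the level $\rho_m=1-\varepsilon$, and $|\nabla p_m|\le(1-\varepsilon)^{-1}|\nabla v_m|$ on $\{\rho_m\ge 1-\varepsilon\}$. However, the step you yourself call the crux is only asserted, and the derivation you sketch does not produce it. Test \eqref{p.eq} against $p_m^\varepsilon=\min\{p_m,\delta_m\}$ and integrate $(m-1)\int p_m p_m^\varepsilon\Delta p_m$ by parts. This gives
\[
\frac{d}{dt}\int_{\R^d}\Psi_m(p_m)\,dx+(m-2)\int_{\R^d}p_m^\varepsilon|\nabla p_m|^2\,dx+(m-1)\int_{\{p_m<\delta_m\}}p_m|\nabla p_m|^2\,dx=\int_{\R^d}R_m\,p_m^\varepsilon\,dx,
\]
where $\Psi_m'=\min\{\cdot,\delta_m\}$ and $R_m$ is the reaction term of \eqref{p.eq}. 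On $\{p_m<\delta_m\}$ the gradient is therefore controlled only with the degenerate weight $p_m$, which says nothing where $p_m$ is near $0$. So the unweighted inequality for $\iint|\nabla p_m^\varepsilon|^2$ that you display does not follow from this test. Step 3 does not bypass the difficulty. It reduces to the same estimate, and a.e. convergence on $\{p_\infty>0\}$ would in any case need the same bound on $\{\rho_m<\tfrac12\}$ to upgrade to $L^2$ convergence.

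The missing estimate is easy once you choose the right quantity. By \eqref{q.norm} with $q=2$,
\[
2m\iint\rho_m^{m-1}|\nabla\rho_m|^2\le\|\rho_{0,m}\|_{L^2}^2+2C\|\rho_m\|_{L^2([0,T]\times\R^d)}^2\le C(T).
\]
Since $|\nabla p_m|^2=m^2\rho_m^{2m-4}|\nabla\rho_m|^2=m\rho_m^{m-3}\cdot m\rho_m^{m-1}|\nabla\rho_m|^2$, it follows that
\[
\int_0^T\!\!\int_{\{\rho_m<1-\varepsilon\}}|\nabla p_m|^2\,dx\,dt\le C(T)\,m(1-\varepsilon)^{m-3}\xrightarrow[m\to\infty]{}0
\]
for each fixed $\varepsilon$. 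Equivalently, testing \eqref{rho.comp} against $\frac{m}{m-2}\min\{\rho_m,1-\varepsilon\}^{m-2}$ gives exactly $\iint_{\{\rho_m<1-\varepsilon\}}|\nabla p_m|^2\le C(T)(1-\varepsilon)^{m-2}$. With this, your Steps 1, 2 and 4 close and give a valid proof, by a route different from the paper's.

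The paper instead factors $|\nabla p_m|^2=\nabla v_m\cdot\nabla(\frac{m}{m-2}\rho_m^{m-2})$ and bounds the second factor in $L^2$ via \eqref{q.norm} with $q=m-3$. That factor converges weakly to $\nabla p_\infty$, because $\frac{m}{m-2}\rho_m^{m-2}\to p_\infty$ strongly. Strong-times-weak then gives $\|\nabla p_m\|_{L^2}\to\|\nabla p_\infty\|_{L^2}$ in one stroke, with no level-set splitting and no $\varepsilon\to0$ limit. Your repaired route uses only a fixed-$q$ energy bound. It also makes explicit that the low-density region is the only place where $\nabla p_m$ and $\nabla v_m$ differ appreciably.
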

\begin{proof}
Assume $m>4$. First, $$|\nabla p_m|^2=\nabla v_m \cdot \nabla(\frac{m}{m-2}\rho_m^{m-2}).$$ Because $\nabla v_m\rightarrow \nabla v_\infty$ strongly in $L^2([0,T]\times\R^d),$ it suffices to show that $\nabla (\frac{m}{m-2}\rho_m^{m-2})$ is bounded in $L^2([0,T]\times \R^d).$ Once this is shown, it will hold that $||\nabla p_m||_{L^2} \rightarrow ||\nabla p_\infty||_{L^2}.$ To see this, notice that $\frac{m}{m-2}\rho_m^{m-2}=\frac{m}{m-2}v_m^{\frac{m-2}{m}} \rightarrow p_\infty$, because $v_m$ converges strongly to $v_\infty=p_\infty.$ Since it already is known that $\nabla p_m \rightharpoonup \nabla p_\infty$ weakly in $L^2([0,T]\times \R^d),$ the weak convergence combined with the convergence in norm implies that $\nabla p_m \rightarrow \nabla p_\infty$ strongly in $L^2([0,T]\times \R^d).$ 

To show the desired $L^2$ bound, $$
|\frac{m}{m-2}\nabla \rho_m^{m-2}|^2=m^2\rho_m^{2m-6}|\nabla \rho_m|^2.$$ Then, using \eqref{q.norm} with $q=m-3,$ $$
\frac{d}{dt}\int_{\R^d}\rho_m^{m-3}\,dx +m(m-3)(m-4)\int_{\R^d}\rho_m^{2m-6}|\nabla \rho_m|^2\, dx \leq (m-3)\int_{\R^d}\rho_m^{m-3}\,dx.$$ Multiplying by $\frac{m}{m-2},$ $$
\frac{d}{dt}\int_{\R^d}\frac{m}{m-2}\rho_m^{m-3}
dx+\frac{m-3}{m-2}(m-4)\int_{\R^d}|\nabla (\frac{m}{m-2}\rho_m^{m-2})|^2\,dx\leq (m-3)\int_{\R^d}\frac{m}{m-2}\rho_m^{m-3}\,dx.$$ Integrating in time,
\begin{equation*}
    \int_{0}^{T}\int_{\R^d}|\nabla (\frac{m}{m-2}\rho_m^{m-2})|^2dxdt \leq \frac{m}{m-4}\int_0^T\int_{\R^d}\rho_m^{m-3}dxdt+\frac{m}{(m-3)(m-4)}\int_{\R^d}\rho_m^{m-3}(0,x)\,dx.
\end{equation*}
Then, since $\rho_m^{m-3}=(\frac{m-1}{m}p_m)^{\frac{m-3}{m-1}} \leq \max\{p_M,1\}$, and the compact support of solutions is uniform in $m$, the right hand side is bounded independent of $m$.
\end{proof}
Finally, we present a lemma that demonstrates that $n_m \log n_m$ is bounded in $L^1([0,\infty))$ in $\theta$, and by Dunford-Pettis theorem, this will imply that $n_\infty$ is not a measure, and the sequence $n_m$ converges in $L^1([0,\infty))$ in $\theta$. This lemma is similar to Lemma 4.5 in \cite{llmw}.
\begin{lemma}\label{lem.nlogn}
    The function $n_m$ satisfies the following:
    \begin{equation}\label{nlogn}
        \sup_{t \in [0,T]}\int_{\R^d}\int_{0}^{\infty}n_m(t,\theta,x)\log_{+}n_m(t,\theta,x)\,d\theta dx \leq C(T),
    \end{equation}
    for a constant $C(T)$ independent of $m$, only depending on $T$ and the uniform in $m$ bound on the initial condition.
\end{lemma}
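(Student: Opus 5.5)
We propose to prove \eqref{nlogn} by a direct entropy estimate on the equation \eqref{nm}, working with the convex function $\Lambda(s)=s\log s$ (or a smooth convex modification $\Lambda_+$ that agrees with $s\log_+ s$ up to a term bounded by a multiple of $s$). Formally we multiply \eqref{nm} by $\Lambda'(n_m)=1+\log n_m$ and integrate over $(0,\infty)\times\R^d$. The solution is only a weak one, so the computation will be done on the regularized problem used to construct solutions in \cite{llmw}. The bound is then passed to the limit by lower semicontinuity of the convex functional, exactly as in Lemma 4.5 of \cite{llmw}. The point is that every constant must depend only on $T$, on the bounds \eqref{a.beta}--\eqref{v.assum} and on the $m$-uniform bounds \eqref{assumpinit}, and never on $m$.

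The computation goes term by term. The time derivative gives $\frac{d}{dt}\iint \Lambda(n_m)$. For the transport in $x$ we write $\div(n_m\nabla p_m)=\nabla n_m\cdot\nabla p_m+n_m\Delta p_m$, which yields
\[
\iint (n_m\log n_m)\,\Delta p_m \, d\theta\, dx
\]
after integrating by parts, that is, the term $-\iint n_m\Delta p_m$. This is the dangerous term, because $\Delta p_m$ is not controlled uniformly. We handle it with the pressure equation \eqref{p.eq}, used in the Aronson--B\'enilan-type direction $-\Delta p_m\le$ something. Concretely, $\iint n_m\Delta p_m$ equals $\int \big(\int_0^\infty n_m\,d\theta\big)\Delta p_m\,dx$. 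Here we use $\int_0^\infty n_m\,d\theta\le \rho_m/V_0$ and write $\rho_m\Delta p_m=\div(\rho_m\nabla p_m)-\nabla\rho_m\cdot\nabla p_m$. Since $\nabla\rho_m\cdot\nabla p_m\ge 0$, only the sign of $n_m\Delta p_m$ on the region where $n_m\log n_m$ has the wrong sign actually matters. The honest route is the following. The contribution of the term is $-\iint n_m\Delta p_m\,(\ldots)$. After one more integration by parts it becomes $\iint \nabla n_m\cdot\nabla p_m$, which equals
\[
-\int \Big(\int_0^\infty n_m\,d\theta\Big)\Delta p_m\,dx .
\]
We split $\int_0^\infty n_m\,d\theta=\rho_m/V_0-\int_0^\infty n_m\,(V(\theta)-V_0)/V_0\,d\theta$. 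The first piece gives
\[
\frac{1}{V_0}\int\nabla\rho_m\cdot\nabla p_m\ge0,
\]
which has a good sign. The second piece must be handled using $V'\ge0$ together with a weighted version of the same identity; this is where the volume assumptions \eqref{v.assum} enter.

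The $\theta$-transport term is $r(p_m)\iint \partial_\theta \Lambda(n_m)$. Integrating in $\theta$, it produces the boundary term $-r(p_m)\int\Lambda(n_m(t,0,x))\,dx$ and a term involving $\partial_\theta r$, which vanishes since $r$ does not depend on $\theta$. The boundary contribution is controlled by the boundary condition: $n_m(t,0,x)\le 2C\int_0^\infty n_m\,d\theta\le C\rho_m/V_0\le C$, so $|\Lambda(n_m(t,0,x))|\le C\,n_m(t,0,x)\le C\rho_m$, which is integrable uniformly by the $L^1$ bound on $\rho_m$. The reaction terms $-(r\nu+\mu)n_m(1+\log n_m)$ are bounded above by $C n_m+C\,n_m\log_- n_m$. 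The negative part is controlled in the standard way: $n\log_- n\le C\sqrt n\le C\big(n+e^{-(|x|^2+\theta)}\big)$ plus $C(|x|^2+\theta)n$. These quantities are uniformly bounded by the moment bounds, and the compact support from Lemmas \ref{lem.cptspt} and \ref{t.cptspt} makes this even simpler. Gronwall's inequality then gives $\sup_t\iint n_m\log n_m\le C(T)$, and $\iint n_m\log_+ n_m$ is bounded by this plus the controlled $\log_-$ part.

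The main obstacle is the $\Delta p_m$ term, because the age structure prevents a direct comparison between $\int_0^\infty n_m\,d\theta$ and $\rho_m$. I would first try to exploit $V_0\le V\le 2V_0$ and a weighted entropy $\iint V(\theta)\,n_m\log n_m$, whose spatial dissipation term becomes exactly $\int\nabla\rho_m\cdot\nabla p_m\ge0$ up to lower-order terms. Those lower-order terms come from $V'$ in the $\theta$-integration, are bounded by $C\iint n_m|\log n_m|$, and are absorbed by Gronwall. This weighted entropy is equivalent to the unweighted one, with constants depending only on $V_0$, so the bound \eqref{nlogn} follows.
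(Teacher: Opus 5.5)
Your final route is essentially the paper's proof. That route is the $V(\theta)$-weighted entropy $\int_{\R^d}\int_0^\infty V(\theta)\,n_m\log n_m\,d\theta\,dx$, whose spatial term is exactly $\int_{\R^d}\nabla\rho_m\cdot\nabla p_m\,dx\ge 0$, with the $\theta=0$ boundary term controlled via $n_m(t,0,x)\le C\rho_m$, the $V'$ term and $\log_-$ part handled by the uniform compact support, and Gronwall; the preliminary unweighted-entropy discussion can be dropped. One small slip: $|s\log s|\le Cs$ fails near $s=0$, but you only need the one-sided bound $n_m(t,0,x)\log n_m(t,0,x)\le n_m(t,0,x)\log_+ C\le C\rho_m$, since that boundary term enters the right-hand side with a plus sign.
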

\begin{proof}
    Begin by multiplying \eqref{nm} by $(1+\log n_m)V(\theta)$ and integrating in $\theta$ and $x$ to get the following equation:
    \begin{multline*}
    \int_{\R^d}\int_{0}^{\infty}\partial_t (n_m \log n_m)V(\theta)+\int_{\R^d}\int_{0}^{\infty}r(p_m)\partial_\theta(n_m \log n_m)V(\theta)d\theta dx\\=
    \int_{\R^d}\int_{0}^{\infty}-r(p_m)\nu(\theta,p_m)n_m(\log n_m+1)V(\theta)-\mu(\theta)n_m(\log n_m+1)V(\theta)d\theta dx -\int_{\R^d}\nabla \rho_m \cdot \nabla p_m dx.
    \end{multline*}
    Then, integrating $\partial_\theta (n_m \log n_m)V(\theta)$ in $\theta$, recalling that $n_m \geq 0$, and noticing that $\nabla \rho_m \cdot \nabla p_m =\frac{m^2}{4}\rho_m^{m-1}|\nabla \rho_m|^2 \geq 0,$ rearranging, we have,
    \begin{multline*}
        \int_{\R^d}\int_{0}^{\infty}\partial_t (n_m \log n_m)V(\theta)d\theta dx \leq \int_{\R^d}r(p_m)V(0)n_m(t,0,x)\log n_m (t,0,x)dx\\+\int_{\R^d}\int_{0}^{\infty}n_m \log n_m (V'(\theta)-r(p_m)\nu(\theta,p_m)V(\theta)-\mu(\theta)V(\theta))d\theta dx \\
         \leq \int_{\R^d}r(p_m)V(0)n_m(t,0,x)\log_+ n_m (t,0,x)dx\\+\int_{\R^d}\int_{0}^{\infty}n_m \log n_m (V'(\theta)-r(p_m)\nu(\theta,p_m)V(\theta)-\mu(\theta)V(\theta))d\theta dx .
    \end{multline*}
   Then, because $n_m(t,0,x)\leq C\rho_m$, for a constant $C$ depending only on $\nu$ and $V$, from the monotonicity of $s \mapsto s\log_+ s,$
   \begin{align*}
   \int_{\R^d}\int_{0}^{\infty}\partial_t(n_m \log n_m)V(\theta)d\theta dx &\leq C\int_{\R^d} \rho_m \log_+ \rho_mdx +C\int_{\R^d}\int_{0}^{\infty}|n_m \log n_m|V(\theta) \\&\leq C(T)+C\int_{\R^d}\int_{0}^{\infty}|n_m \log n_m|V(\theta)d\theta dx,
   \end{align*}
   using the $L^1 \cap L^\infty(\R^d)$ bound on $\rho_m$, for varying constants $C$ depending only on $\nu$, $V$, $\mu$, $r$, and the uniform in $m$ $L^1$ and $L^\infty$ bounds of $\rho_m.$ 
   Then, notice that $|n_m\log n_m|$ can be written as
   $$
   |n_m \log n_m|=n_m \log n_m +2|n_m \log n_m|_{\chi_{0\leq n_m \leq 1}}.
   $$
   Using the property $|s\log s|_{\chi_{0\leq s \leq 1}}\leq sC_1+C_2$ for arbitrary constants $C_1, C_2>0,$ we have 
   \begin{align*}
   \int_{\R^d}\int_{0}^{\infty}\partial_t (n_m \log n_m)V(\theta)d\theta dx &\leq C(T)+C\int_{\R^d}\int_{0}^{\infty}n_m \log n_m V(\theta)d\theta dx + C\int_{\R^d}\int_{0}^{\infty}n_m C_1 + C_2\,d\theta dx \\
   &\leq C(T)+C\int_{\R^d}\int_{0}^{\infty}n_m \log n_m V(\theta)d\theta dx +C||n_m||_{L^\infty(0,T;L^1([0,\infty)\times \R^d))}\\&+C|\supp n_m|\\
   &\leq C(T)+C\int_{\R^d}\int_{0}^{\infty}n_m \log n_m V(\theta)d\theta dx,
   \end{align*}
   using the compact support of $n_m$ in $\theta$ and $x$ and the $L^\infty(0,T;L^1( [0,\infty)\times \R^d)$ bound of $n_m$. By Gronwall's inequality,
   $$
   \int_{\R^d}\int_{0}^{\infty}n_m \log n_m d\theta dx \leq C(T)+C(T)\int_{\R^d}\int_{0}^{\infty}n_{0,m}\log n_{0,m}d\theta dx\leq C(T),
   $$
   using the uniform in $m$ bound of $n_{0,m}.$ Finally, using again the property $|s\log s|_{\chi_{0\leq s \leq 1}}\leq s\omega +Ce^{-\omega/2}$, we have
   \begin{align*}
       n_m \log_+ n_m& =n_m\log n_m+|n_m \log n_m|_{\chi_{0\leq n_m \leq 1}}\\
       &\leq n_m \log n_m +(n_m \omega +Ce^{-\omega/2}),
   \end{align*}
   and using again the compact support of $n_m$, and the $L^\infty(0,T;L^1([0,\infty)\times \R^d))$ bounds of $n_m$, 
   $$
   \int_{\R^d}\int_{0}^{\infty}n_m \log_+ n_m d\theta dx \leq C(T),
   $$
   for a constant $C(T)$ depending only on the uniform bound of the initial data.
 \end{proof}
\begin{proof}[Proof of Proposition \ref{prop.conv}]
    The equation \eqref{nm} can be written in the weak format
    \begin{multline}\label{weak.ndep}
    \int_{0}^{T}\int_{\R^d} \int_{0}^{\infty}n_m [-\partial_t \varphi - r(p_m)\partial_\theta \varphi + \nabla p_m \cdot \nabla \varphi +r(p_m)\nu(\theta,p_m)\varphi+\mu(\theta)\varphi]\, d\theta dx dt\\ = \int_{\R^d} \int_{0}^{\infty}n_{0,m}(x,\theta) \varphi(0,\theta,x) \, d\theta dx +\int_{0}^{T}\int_{\R^d}\int_{0}^{\infty}2\nu(\theta,p_m)n_m\varphi(t,0,x)\, d\theta dx dt,
\end{multline}  for any $\varphi \in C^1([0,T]\times [0,\infty)\times \R^d).$
Since $n_m$ converges weakly in $L^\infty(0,T;L^1((0,\infty)\times \R^d)$, $p_m$ and $\nabla p_m$ converge strongly in $L^2([0,T]\times \R^d),$ and solutions are compactly supported, all of the above terms can be passed to the limit, which solves
\begin{multline}\label{weak.ndep.inf}
    \int_{0}^{T}\int_{\R^d} \int_{0}^{\infty}n_\infty [-\partial_t \varphi - r(p_\infty)\partial_\theta \varphi + \nabla p_\infty \cdot \nabla \varphi +r(p_\infty)\nu(\theta,p_\infty)\varphi+\mu(\theta)\varphi]\, d\theta dx dt \\= \int_{\R^d} \int_{0}^{\infty}n_{0}(x,\theta) \varphi(0,\theta,x) \, d\theta dx +\int_{0}^{T}\int_{\R^d}\int_{0}^{\infty}2\nu(\theta,p_\infty)n_\infty\varphi(t,0,x)\, d\theta dx dt.
\end{multline}
Further, the equation on $\rho_m,p_m$ can be written in the weak format
\begin{multline*}
-\int_{0}^{T}\int_{\R^d}\rho_m \partial_t \varphi \,dx dt+\int_{0}^{T}\int_{\R^d}\nabla \varphi \cdot \nabla \rho_m^m\,dxdt-\int_0^T \int_{\R^d}\int_{0}^{\infty}n_m(F(\theta,p_m)-\mu(\theta)V(\theta))\,d\theta \varphi \, dx dt \\=\int_{\R^d}\rho_{0,m}(x)\varphi(0,x)\,dx
\end{multline*}
for $\varphi \in C^1([0,T]\times \R^d).$ Passing to the limit, 
\begin{multline*}
    -\int_{0}^{T}\int_{\R^d}\rho_\infty \partial_t \varphi \,dx dt+\int_{0}^{T}\int_{\R^d}\nabla \varphi \cdot \nabla p_\infty \,dxdt-\int_0^T \int_{\R^d}\int_{0}^{\infty}n_\infty(F(\theta,p_\infty)-\mu(\theta)V(\theta))\,d\theta \varphi \, dx dt \\=\int_{\R^d}\rho_{0}(x)\varphi(0,x)\,dx.
\end{multline*}
\end{proof}
Finally, we prove the complementarity formula, \eqref{comp}.
\begin{proof}[Proof of Theorem \ref{complement}]
This proof relies on the strong convergence of $p_m$ and $\nabla p_m$.
Dividing \eqref{p.eq} by $m-1$, for a test function $\xi \in C^1([0,T]\times R^d),$ the following identity holds.
\begin{align*}
\int_{0}^{T}\int_{\R^d}\xi p_m\Delta p_m &+\xi \frac{m}{m-1}(\frac{m-1}{m}p_m)^{\frac{m-2}{m-1}}\int_{0}^{\infty}n_m(F(\theta,p_m)-\mu(\theta)V(\theta))d\theta\,dx\,dt\\&=\frac{1}{m-1}\int_{0}^{T}\int_{\R^d} -p_m\partial_t \xi -\xi|\nabla p_m|^2\;dx\,dt.
\end{align*}
Taking the $\limsup$ of both sides, from the boundedness of $p_m \in L^1([0,T]\times \R^d)$ and $\nabla p_m \in L^2([0,T]\times \R^d)$,
\begin{align*}
\limsup_{m\rightarrow \infty}\int_{0}^{T}\int_{\R^d}\xi p_m\Delta p_m &+\xi \frac{m}{m-1}(\frac{m-1}{m}p_m)^{\frac{m-2}{m-1}}\int_{0}^{\infty}n_m(F(\theta,p_m)-\mu(\theta)V(\theta))d\theta\,dx\,dt=0.
\end{align*}
Integrating by parts and using the strong convergence of $p_m$ and $\nabla p_m$, and the weak convergence of $n_m$,
\begin{align*}
0&=\limsup_{m\rightarrow\infty}\int_{0}^{T}\int_{\R^d}-\nabla \xi \cdot \nabla p_m \,p_m-\xi |\nabla p_m|^2\\&+\xi(\frac{m}{m-1})^{\frac{1}{m-1}}p_m^{\frac{m-2}{m-1}}\int_{0}^{\infty}n_m(F(\theta,p_m)-\mu(\theta)V(\theta))d\theta\,dx\,dt\\
&=\int_{0}^{T}\int_{\R^d} -\nabla \xi \cdot \nabla p_\infty p_\infty - \xi |\nabla p_\infty|^2+\xi p_\infty \int_{0}^{\infty}n_\infty(F(\theta,p_\infty)-\mu(\theta)V(\theta))d\theta\, dx\, dt.
\end{align*}
 Integrating by parts again yields
\begin{align*}
     0=\int_{0}^{T}\int_{\R^d}\xi \Delta p_\infty p_\infty +\xi p_\infty \int_{0}^{\infty}n_\infty(F(\theta,p_\infty)-\mu(\theta)V(\theta))d\theta,
\end{align*}
 which completes the proof.
\end{proof}
\bibliographystyle{abbrv}
\bibliography{twref.bib}

\end{document}